\newcommand{\arr}{{\; \rightarrow \;}}
\def\BB{\mathbb{B}}
\def\FF{\mathbb{F}}
\def\GG{\mathbb{G}}
\def\HH{\mathbb{H}}
\def\PP{\mathbb{P}}
\def\QQ{\mathbb{Q}}
\def\ZZ{\mathbb{Z}}
\newcommand{\ol}{{\mathcal{O}_L}}
\newcommand{\of}{{\mathcal{O}_F}}
\newcommand{\gera}{{\frak{a}}}
\newcommand{\gerb}{{\frak{b}}}
\newcommand{\gerd}{{\frak{d}}}
\newcommand{\gerl}{{\frak{l}}}
\newcommand{\germ}{{\frak{m}}}
\newcommand{\gerp}{{\frak{p}}}
\newcommand{\gerq}{{\frak{q}}}
\newcommand{\uA}{{\underline{A}}}
\newcommand{\calA}{{\mathcal{A}}}
\newcommand{\calF}{{\mathcal{F}}}
\newcommand{\calH}{{\mathcal{H}}}
\newcommand{\calO}{{\mathcal{O}}}
\newcommand{\calP}{{\mathcal{P}}}
\newcommand{\ra}{\rightarrow}
\newlength{\ownl}
\newcommand{\Cl}{{\operatorname{Cl}}}
\newcommand{\End}{{\operatorname{End}\,}}
\newcommand{\Frob}{{\operatorname{Frob}}}
\newcommand{\Gal}{{\operatorname{Gal}\,}}
\newcommand{\Hom}{{\operatorname{Hom}\,}}
\newcommand{\pr}{{\operatorname{pr}\,}}
\newcommand{\Spec}{{\operatorname{Spec}\,}}
\newcommand{\tr}{{\operatorname{tr}\,}}
\newcommand{\GL}{\operatorname{GL}}
\newcommand{\PSL}{\operatorname{PSL}}
\newcommand{\F}{{\mathbb{F}}}
\newcommand{\Q}{{\mathbb{Q}}}
\newcommand{\cO}{\mathcal{O}}
\newcommand{\tX}{\widetilde{{X}}}
\newcommand{\tY}{\widetilde{{Y}}}
 \newcommand{\rhobar   }{{\overline{\rho}}}
\newcommand{\tgamma   }{\widetilde{\gamma}}
\DeclareMathOperator{\lcm}{lcm}
\newcommand{\Qpbar}{\overline{\Q}_p}
\newcommand{\Fpbar}{\overline{\F}_p}
\newcommand{\Xbar}{\overline{X}}
\newcommand{\Ybar}{\overline{Y}}
\newcommand{\XR}{X^{\rm R}}
\newcommand{\XbarR}{\overline{X}^{\rm R}}
\newcommand{\YbarR}{\overline{Y}^{\rm R}}
\newcommand{\tXbar}{\widetilde{\overline{X}}}
\newcommand{\tYbar}{\widetilde{\overline{Y}}}
\newcommand{\tXbarR}{\widetilde{\overline{X}}^{\rm R}}
\newcommand{\tYbarR}{\widetilde{\overline{Y}}^{\rm R}}
\newcommand{\fbar}{\bar{f}}
\newcommand{\gbar}{\bar{g}}
\newcommand{\Vbar}{\bar{V}}
\newtheorem{ithm}{Theorem}
\newtheorem{thm}{Theorem}[subsection]
\newtheorem{cor}[thm]{Corollary}
 \newtheorem{lemma}[thm]{Lemma}
\newtheorem{lem}[thm]{Lemma} 
\newtheorem{prop}[thm]{Proposition}
 \theoremstyle{definition}
 \theoremstyle{definition}
\newtheorem{defn}[thm]{Definition} \theoremstyle{remark}
\numberwithin{equation}{subsection}
\theoremstyle{definition}
\begin{document}
\title{Mod $p$ Hilbert modular forms of parallel weight one: the ramified case}

\author{Payman L Kassaei}\email{payman.kassaei@kcl.ac.uk}\address{Department of
  Mathematics, King's College London}  \subjclass[2010]{11F33, 11F41, 11F80.} \keywords{Companion forms, parallel weight one,  Serre's conjecture, mod $p$ Hilbert modular forms}
\begin{abstract} We generalize the main result of ~\cite{GeKa13} to all totally real fields $F$. In other words, for $p>2$ prime, we prove (under a mild Taylor--Wiles hypothesis) that if a modular representation $\rhobar:G_F\to\GL_2(\Fpbar)$ is unramified and $p$-distinguished at all places above~$p$, then it arises from a
  mod~$p$ Hilbert modular form of parallel weight one. This (mostly) resolves the weight one part of Serre's conjecture for totally real fields.
  \end{abstract}
\maketitle\section{Introduction} 

Let $p$ be a prime. Serre conjectured in \cite{Ser87} that if the representation $$\rho:G_\QQ \arr \GL_2(\Fpbar)$$ is continuous, irreducible, and odd, then it is modular in the sense that it arises as the reduction mod $p$ of the $p$-adic Galois representation associated to a Hecke modular eigenform. Serre also formulated a refined conjecture where he specified the minimal weight $k\geq 2$ and level $N$ (coprime to $p$). The weight part of Serre's conjecture was later extended to include the case of weight one mod $p$ modular forms (see \cite{Ed92}); this case is different from the rest as mod $p$ modular forms of weight one are not necessarily reductions of modular forms in characteristic zero. Serre conjectured that a mod $p$ Galois representation arises from a mod $p$ modular form of weight one (and level prime to $p$) if the representation is unramified at $p$. Gross in \cite{Gro90} proved a companion form theorem and settled the weight one part of Serre's conjecture under the hypothesis that the eigenvalues of a Frobenius at $p$ are distinct (the $p$-distinguished assumption). Gorss's proof relied on some unchecked compatibilities, however, a later proof given in \cite{CoVo92} didn't.

The analogues of Serre's refined conjecture in the case of totally real fields have been presented in \cite{BDJ10} and its generalizations to the ramified case. Much work has been done on the weight part of Serre's conjecture, but as in the classical case, the parallel weight one case is different due to lack of automatic liftings to characteristic zero. In ~\cite{GeKa13}, we generalized the result of Gross \cite{Gro90}, and proved a companion forms theorem for Hilbert modular forms of parallel weight one (under the $p$-distinguished assumption), in the case
that~$p$ is {\it unramified} in the totally real field. In this paper we prove the main result  of \cite{GeKa13} for all totally real fields, allowing arbitrary ramification. This (mostly) resolves the weight one part of Serre's conjecture for totally real fields. We should mention that the reverse implications, i.e., the unramifiedness of Galois representations associated to parallel weight one Hilbert modular forms have been studied in \cite{ERX}, \cite{DiWi}.

\begin{ithm}\label{thm: main result, introduction version}
  Let $p>2$ be prime. let $F$ be a totally real field, and $$\rhobar:G_F\to\GL_2(\Fpbar)$$ an irreducible
  modular representation such that $\rhobar|_{G_{F(\zeta_p)}}$ is
  irreducible.   If $p=3$ (respectively $p=5$), assume further that the projective image of
  $\rhobar(G_{F(\zeta_p)})$ is not conjugate to $\PSL_2(\F_3)$ (respectively $\PSL_2(\F_5)$).

Suppose that
  for each place $\gerp|p$, $\rhobar|_{G_{F_\gerp}}$ is unramified, and that
  the eigenvalues of $\rhobar(\Frob_\gerp)$ are distinct. Then, there is a mod $p$ Hilbert modular form $f$ of
  parallel weight $1$ and level prime to $p$ such that $\rhobar_f\cong\rhobar$.
  \end{ithm}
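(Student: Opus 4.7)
The plan is to adapt the strategy of \cite{GeKa13} to the ramified setting. That strategy reduces the production of a classical mod $p$ weight one eigenform to the classicality of an explicitly constructed overconvergent $p$-adic Hilbert modular eigenform of parallel weight one, and this reduction is essentially formal in $F$. The substance of the argument lies in the geometry of the Hilbert modular variety at primes $\gerp \mid p$ where $F/\Q$ is ramified.

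First I would invoke the known cases of the weight part of Serre's conjecture for totally real fields, together with the modularity hypothesis on $\rhobar$, to produce a Hilbert modular eigenform $g$ of some parallel weight and tame level coprime to $p$ whose mod $p$ Galois representation is $\rhobar$. Since $\rhobar$ is unramified and $p$-distinguished at every $\gerp \mid p$, Hida theory supplies, for each choice of a root of $\charpol(\rhobar(\Frob_\gerp))$ at each $\gerp$, an ordinary mod $p$ Hecke eigensystem occurring in a space of ordinary forms of weight congruent to parallel $1$ modulo $(p-1)$. This produces $2^{\#\{\gerp \mid p\}}$ candidate eigensystems, corresponding to the companion forms to be glued.

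Second, for each subset $J$ of the primes above $p$ I would realize the corresponding eigensystem as an overconvergent mod $p$ Hilbert modular eigenform $f_J$ of parallel weight one, initially defined on a small tube in the analytic Hilbert modular variety cut out by the vanishing to small order of certain partial Hasse invariants. Carrying this out at ramified primes requires working with the Pappas--Rapoport integral model, whose Goren--Kassaei-type stratification refines the Goren--Oort stratification used in \cite{GeKa13}: at a prime $\gerp$ with ramification index $e_\gerp$, the partial Hasse invariants indexed by the $e_\gerp f_\gerp$ embeddings of $F_\gerp$ into $\Qpbar$ replace the $f_\gerp$ partial Hasse invariants of the unramified case.

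Third, and most substantially, I would analytically continue each $f_J$ to the full rigid-analytic Hilbert modular variety. The technique is iterative application of $U_\gerp^{-1}$ and an analysis of the effect of $U_\gerp$ on valuations of partial Hasse invariants along the Pappas--Rapoport stratification; the $p$-distinguishedness and the choice of $U_\gerp$-eigenvalue at each $\gerp$ control the sign of this effect. The companion forms $f_J$ are then to be glued, using that they share the same tame Hecke eigensystem and that at each $\gerp$ the two eigenvalues of $U_\gerp$ exhaust the roots of $\charpol(\rhobar(\Frob_\gerp))$; this gluing produces a single overconvergent form of parallel weight one defined on the entire variety, hence classical by rigid GAGA. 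The associated Galois representation is $\rhobar$ by construction.

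The main obstacle is precisely this third step in the ramified case. In \cite{GeKa13} the Goren--Oort stratification on the unramified Deligne--Pappas model admits a clean description of how $U_\gerp$ moves between strata, which is the engine of analytic continuation. In the ramified case each such $\gerp$ contributes an internal stratification indexed by embeddings of $F_\gerp$ into $\Qpbar$, and one has to prove the analogous dynamical statement for the refined partial Hasse invariants arising from the Pappas--Rapoport filtration. Establishing that, together with checking that the gluing hypotheses of the unramified argument (e.g.\ compatibility of canonical subgroups of higher order over the overlap regions) remain valid in the ramified setting, is the technical heart of the generalization.
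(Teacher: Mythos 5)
Your proposal follows a Buzzard--Taylor style route (Hida families, overconvergent weight-one eigenforms, analytic continuation via the dynamics of $U_\gerp$, gluing, classicality by rigid GAGA). That is not the strategy of \cite{GeKa13} or of this paper, and its central step has a genuine gap for the statement at hand. The theorem produces a \emph{mod $p$} form of parallel weight one, and such forms need not be reductions of characteristic-zero weight-one forms. If you carry out your third step rigid-analytically (the only setting in which tubes, $U_\gerp^{-1}$-contraction and rigid GAGA exist), the objects to be glued are the $2^{n}$ characteristic-zero overconvergent weight-one specializations $f_J$; but these have genuinely distinct Galois representations that agree only modulo $p$, so they do not coincide on the overlap regions and the gluing hypotheses fail --- indeed success would yield a classical characteristic-zero weight-one form, i.e.\ an Artin-type lift of $\rhobar$, which is strictly stronger than the theorem and not available under its hypotheses. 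If instead you try to run the argument modulo $p$, there is no analytic continuation mechanism at all: the non-ordinary locus is a divisor in the special fibre, there are no tubes and no rigid GAGA, and nothing in your outline extends a weight-one section across that divisor. A secondary gap: producing, for every choice of Frobenius eigenvalues at the places above $p$, an ordinary eigensystem lifting $\rhobar$ with those prescribed eigenvalues is not a consequence of Hida theory alone; it is precisely the modularity-lifting input (Theorem \ref{thm: lifting to weight p by blgg} and Corollary \ref{cor:2^n forms}).

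For comparison, the paper never touches weight one until the last line: it uses the modularity lifting theorem to produce $2^{n}$ eigenforms of parallel weight $p$ and level prime to $p$ with the prescribed $T_\gerp$-eigenvalue reductions, forms the combinations $f=\sum_I(-1)^{|I|}\tilde{\alpha}_I f_I$ and $g=\sum_I(-1)^{|I|}f_I$, and proves, via $q$-expansions at the cusps $Ta'_\gera$ together with the stratification of the Rapoport locus of the Iwahori-level special fibre $\tYbarR(\gerp_0)$, that $\pi_1^\ast\fbar=(\pr^\ast)^p\pi_2^\ast\gbar$ on the stratum $Z_{\emptyset,\BB_{\gerp_0}}$; factoring this identity through Verschiebung (Lemma \ref{Lemma: Isomorphism} and Lemma \ref{Lemma: Hasse}) shows $\fbar$ is divisible by $H_{\beta_0}^{e_{\gerp_0}}$ for every $\beta_0$, hence by the total Hasse invariant $H$, and $\fbar/H$ is the desired mod $p$ form of parallel weight one. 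Your instinct that the ramified case requires the Rapoport-locus/Pappas--Rapoport refinement of the stratification and the $e_\gerp$-th powers of partial Hasse invariants is correct, but that refinement is deployed in this characteristic-$p$ divisibility argument, not in a $p$-adic analytic continuation.
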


Our method of proof is a combination of modularity lifting theorem
  techniques and geometric methods.  The premise of the argument is the same as in \cite{GeKa13}, however one needs a more detailed study of the geometry of the special fibre of Iwahoric level Hilbert modular varieties in the case $p$ is ramified in $F$ . Using modularity lifting theorems, we obtain  $2^n$ Hilbert modular forms of parallel
  weight $p$ and level prime to $p$, where $n$ is the number of places of $F$
  over  $p$. The mod $p$ reduction of these eigenforms have the same eigenvalue for any Hecke operator away from $p$.   We take a suitable linear combination $\fbar$ of these  mod $p$ Hilbert eigenforms, and show that it is divisible by $H$, the Hasse invariant of parallel weight $p-1$. To do so, we fix a prime $\gerp_0$ above $p$, and a partial Hasse invariant $H_{\beta_0}$ at $\gerp_0$,  and we use the geometry of a stratification on the Rapoport locus of the spacial fibre of the Iwahoric level Hilbert modular variety $Y(\gerp_0)$ to explicitly calculate the quotient of $\fbar$ by $H_{\beta_0}^{e_{\gerp_0}}$. Here, the ramification index $e_{\gerp_0}$ is the maximal power of $H_{\beta_0}$ dividing $H$.    It is easy then to show that the quotient $\fbar/H$ is the desired  Hilbert modular form of parallel weight one.  We use the $p$-distinguished assumption in an essential way: it implies that $\fbar$ is nonzero. However, we expect to be able to remove this condition in an upcoming work. 
  
  \

  \noindent  {\bf Acknowledgments.} We would like to thank Toby Gee for many helpful discussions.

\section{Modularity lifting in weight $p$}\label{sec: BLGG}\subsection{} We first set up some notation.  If $K$ is a field, we let $\overline{K}$ denote
an algebraic closure of $K$. We let $G_K:=\Gal(\overline{K}/K)$
denote the absolute Galois group of $K$. Let $p$ be a prime number, and let
$\varepsilon$ denote the $p$-adic cyclotomic character; our choice of
convention for Hodge--Tate weights is that $\varepsilon$ has all
Hodge--Tate weights equal to $1$. 

Let $F$ be a totally real field. For a finite place $\gerq$ of $F$, we denote by $F_\gerq$ the completion of $F$ at $\gerq$, and by $\calO_{\gerq}$ its ring of integers. We let $\varpi_\gerq$ denote a choice of a uniformizer for $\calO_\gerq$. If
$\gerq$  is coprime to $N$,
then we have the Hecke operator $T_\gerq$
corresponding to the double coset \[\GL_2(\cO_\gerq)
    \begin{pmatrix}
      \varpi_\gerq&0\\0&1
    \end{pmatrix}\GL_2(\cO_\gerq)\]
    acting on the space of Hilbert modular forms of weight $k$ and level $\Gamma_1(N)$. Let 
$f$ be a cuspidal Hilbert modular eigenform in this space. There is a Galois representation $$\rho_f:G_F\to\GL_2(\Qpbar)$$
    associated to $f$, such  that if $\gerq\nmid Np$, and $\Frob_\gerq$ is an \emph{arithmetic} Frobenius element
    of $G_{F_\gerq}$, then $\tr\rho_f(\Frob_\gerq)$ is the $T_\gerq$-eigenvalue of
    $f$, and its determinant  a finite
    order character times $\varepsilon^{k-1}$. We say that
    $$\rhobar:G_F\to\GL_2(\Fpbar)$$ is \emph{modular} if it is equivalent to 
    the reduction mod $p$ of the Galois representation
    $\rho_f:G_F\to\GL_2(\Qpbar)$, for some $f$ as above.

The following is Theorem 2.1.1 in \cite{GeKa13}, proven using modularity lifting techniques and results from  \cite{Ki08}, \cite{CHT08},  \cite{BGHT11}, \cite{Gee11}, \cite{BLGG12},  \cite{GG12},  \cite{Tho12}, \cite{BLGG13}, \cite{BGGT14}.

\begin{thm}\label{thm: lifting to weight p by blgg}

   Let $p>2$ be prime. let $F$ be a totally real field, and $$\rhobar:G_F\to\GL_2(\Fpbar)$$ an irreducible
  modular representation such that $\rhobar|_{G_{F(\zeta_p)}}$ is
  irreducible.   If $p=3$ (respectively $p=5$), assume further that the projective image of
  $\rhobar(G_{F(\zeta_p)})$ is not conjugate to $\PSL_2(\F_3)$ (respectively $\PSL_2(\F_5)$). Suppose that
  for each place $\gerp|p$, \[\rhobar|_{G_{F_\gerp}}\cong 
  \begin{pmatrix}
    \lambda_{\alpha_{1,\gerp}} &0\\0&\lambda_{\alpha_{2,\gerp}}
  \end{pmatrix}\]where $\lambda_{x,\gerp}$ is the unramified character sending
  an arithmetic Frobenius element at $\gerp$ to $x$ (in particular, $\rhobar|_{G_{F_\gerp}}$ is unramified) . For each place $\gerp|p$, let
  $\alpha_\gerp$ be a choice of one of $\alpha_{1,\gerp}$,
  $\alpha_{2,\gerp}$. Let $M$ be an integer coprime to $p$ and divisible
  by the Artin conductor of $\rhobar$. Then there is a Hilbert modular eigenform $f$ of
  parallel weight $p$ such that:
  \begin{itemize}
  \item $f$ has level $\Gamma_1(M)$,
  \item $\rhobar_f\cong\rhobar$,
  \item  for each place
  $\gerp|p$, we have $T_\gerp f=\tilde{\alpha}_\gerp f$, for some lift $\tilde{\alpha}_\gerp$ of
  $\alpha_\gerp$.
  \end{itemize}

\end{thm}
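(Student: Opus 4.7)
The plan is to deduce this theorem from the modularity lifting theorems for ordinary parallel-weight-$p$ Galois representations over totally real fields established in the cited references. The strategy has two parts: first, construct an appropriate characteristic zero lift $\rho$ of $\rhobar$ with prescribed Hodge--Tate weights and local behaviour at $p$; second, apply a modularity lifting theorem to conclude that $\rho$ is modular, yielding $f$.

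For the first part, at each place $\gerp \mid p$ the $p$-distinguished hypothesis gives $\rhobar|_{G_{F_\gerp}} \cong \lambda_{\alpha_{1,\gerp}} \oplus \lambda_{\alpha_{2,\gerp}}$ with $\alpha_{1,\gerp} \neq \alpha_{2,\gerp}$. Choosing a lift $\tilde{\alpha}_\gerp \in \Zpbar^\times$ of the selected eigenvalue $\alpha_\gerp$, I would construct an ordinary crystalline local lift $\rho_\gerp : G_{F_\gerp} \to \GL_2(\Zpbar)$ of Hodge--Tate weights $\{0, p-1\}$ at every embedding $F_\gerp \hookrightarrow \Qpbar$, whose unramified quotient sends $\Frob_\gerp$ to $\tilde{\alpha}_\gerp$. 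Such lifts exist because the relevant ordinary crystalline local framed deformation ring is formally smooth, a fact going back to Kisin's analysis. These local lifts are then glued into a continuous global lift $\rho : G_F \to \GL_2(\Zpbar)$ of $\rhobar$ which is unramified outside $Mp$, crystalline with the prescribed ordinary local structure at each $\gerp \mid p$, and of determinant a finite-order character times $\varepsilon^{p-1}$. The gluing uses the Galois-cohomological techniques of \cite{CHT08} and the Khare--Wintenberger method, as refined in \cite{BLGG12,BLGG13,BGGT14}, killing obstructions via auxiliary primes where needed.

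For the second part, I would apply the ordinary modularity lifting theorem for Hilbert modular forms established in \cite{BLGG13,BGGT14} (building on \cite{Ki08} and \cite{Tho12}) to $\rho$. The Taylor--Wiles hypothesis that $\rhobar|_{G_{F(\zeta_p)}}$ is irreducible, together with the exclusion of the specific $\PSL_2(\F_p)$ projective images at $p = 3, 5$, is precisely what is required for the patching argument in these theorems to run. This yields a Hilbert modular eigenform $f$ of parallel weight $p$ and level $\Gamma_1(M)$ with $\rho_f \cong \rho$, hence $\rhobar_f \cong \rhobar$. The prescribed ordinary local structure at each $\gerp$ translates, via local-global compatibility for ordinary crystalline forms at $p$, into the eigenvalue relation $T_\gerp f = \tilde{\alpha}_\gerp f$.

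The main obstacle is the second part: one must verify that the chosen modularity lifting theorem applies precisely to the deformation problem set up here, in the Hodge--Tate weight range $\{0, p-1\}$ and with the prescribed ordinary local conditions at each $\gerp \mid p$. The first step, by contrast, is essentially routine once the $p$-distinguished hypothesis is in place, since the relevant local deformation rings in the ordinary distinguished case are well understood and smooth, and the global existence follows from the now-standard lifting arguments of \cite{BLGG12,BGGT14}.
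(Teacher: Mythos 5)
Your proposal follows essentially the same route as the paper, which does not reprove this statement but quotes it as Theorem 2.1.1 of \cite{GeKa13}, where it is established by exactly the machinery you invoke: ordinary crystalline lifts with prescribed unramified quotient at each $\gerp\mid p$ (trivial to write down locally in the $p$-distinguished unramified case), automorphic/Galois lifting results of \cite{BLGG12}, \cite{BLGG13}, \cite{BGGT14} together with ordinary modularity lifting, and local-global compatibility to read off the $T_\gerp$-eigenvalue. The only cosmetic difference is that you phrase it as ``construct a global Galois lift, then apply an $R=\mathbb{T}$ theorem,'' whereas the cited proof works directly with theorems producing modular lifts of prescribed local behaviour; the inputs and the logical content are the same.
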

 
 We will use the following corollary of Theorem \ref{thm: lifting to weight p by blgg} in the proof of our main theorem.
 
 \begin{cor}\label{cor:2^n forms} Let notation be as in Theorem \ref{thm: lifting to weight p by blgg}. There is a finite extension $K$ of $\QQ_p$ with residue field $\germ_K$, an integer $N$ divisible by the Artin conductor of $\rhobar$, and, for any $I \subset \{\gerp|p\}$, a normalized Hilbert eigenform $f_I$ of weight $p$ and level
    $\Gamma_1(N)$,  such that
    \begin{enumerate}
    \item $\rhobar_{f_I}\cong\rhobar$,
    \item for each prime
    $\gerp|p$, we have $T_\gerp f_I=\tilde{\alpha}_{I,\gerp}f_I,$ where $\tilde{\alpha}_{I,\gerp}$ lifts
    $\alpha_{1,\gerp}$ if $\gerp\in I$, and $\alpha_{2,\gerp}$ if $\gerp\notin I$,
    \item for any prime $\gerl\nmid p$, the eigenvalues of $T_\gerl$ on the $f_I$'s have all the same reduction in $\germ_K$.
    \end{enumerate} 
    
    \end{cor}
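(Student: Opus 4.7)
The plan is to apply Theorem \ref{thm: lifting to weight p by blgg} once for each subset $I\subseteq\{\gerp\mid p\}$, choosing $\alpha_\gerp=\alpha_{1,\gerp}$ when $\gerp\in I$ and $\alpha_\gerp=\alpha_{2,\gerp}$ otherwise. First I would fix a single integer $N$ coprime to $p$ and divisible by the Artin conductor of $\rhobar$, and feed this same $N$ as the auxiliary level $M$ into each of the $2^n$ applications of the theorem. This yields, for each $I$, a Hilbert eigenform $f_I$ of parallel weight $p$ and level $\Gamma_1(N)$ satisfying conditions (1) and (2) directly. Only finitely many Hecke eigenvalues arise across all the $f_I$, so they lie in a single finite extension $K/\Q_p$, whose ring of integers and residue field I denote by $\calO$ and $\germ_K$.

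For condition (3) at primes $\gerl\nmid Np$, the assertion is immediate from local--global compatibility and the Chebotarev principle: the $T_\gerl$-eigenvalue of $f_I$ equals $\tr(\rho_{f_I}(\Frob_\gerl))$, and reducing modulo a uniformizer $\varpi$ of $\calO$ gives $\tr(\rhobar(\Frob_\gerl))$ by~(1), a quantity manifestly independent of~$I$.

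The main point, and the step I expect to be the most delicate, is condition (3) at primes $\gerl\mid N$ with $\gerl\nmid p$: here the eigenvalue of the Hecke or Atkin--Lehner operator $T_\gerl$ (respectively $U_\gerl$) acting on $f_I$ is not directly visible from $\rhobar$. I would argue via local--global compatibility at $\gerl$: the local component $\pi_{I,\gerl}$ of the automorphic representation attached to $f_I$ corresponds under local Langlands to $\WD(\rho_{f_I}|_{G_{F_\gerl}})$, and since $\rho_{f_I}|_{G_{F_\gerl}}$ reduces to $\rhobar|_{G_{F_\gerl}}$ independently of $I$, the resulting $T_\gerl$-eigenvalues all share a common reduction mod~$\varpi$. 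Any residual compatibility issue (for instance, if different $f_I$ have different conductors at some $\gerl\mid N$, or if the reduction of the local type is not determined cleanly by $\rhobar|_{G_{F_\gerl}}$) can be circumvented by enlarging $N$ further by auxiliary primes and replacing each $f_I$ by an appropriate $\gerl$-stabilization, choosing a common root of the mod $p$ Hecke polynomial at each such prime. This flexibility in the choice of $N$ and of stabilizations is precisely what allows a uniform choice of the $f_I$ yielding the desired compatibility.
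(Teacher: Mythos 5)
Your first two steps match the paper: apply Theorem \ref{thm: lifting to weight p by blgg} once per subset $I$ with a fixed auxiliary level, and deduce condition (3) at primes $\gerl\nmid Np$ from $\rhobar_{f_I}\cong\rhobar$ via $\tr\rho_{f_I}(\Frob_\gerl)$. The gap is exactly at the step you flag as delicate, namely condition (3) at primes $\gerl\mid N$, $\gerl\nmid p$, and your main argument there does not work. Knowing that $\rho_{f_I}|_{G_{F_\gerl}}$ reduces to $\rhobar|_{G_{F_\gerl}}$ independently of $I$ does \emph{not} pin down the reduction of the $T_\gerl$ (i.e.\ $U_\gerl$) eigenvalue: that eigenvalue depends on the particular eigenvector in the $\gerl$-old subspace and on the inertial type of the characteristic-zero lift, neither of which is determined by $\rhobar|_{G_{F_\gerl}}$. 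For instance, if $\rhobar$ is unramified at an auxiliary prime $\gerl\mid N$ with distinct Frobenius eigenvalues $\bar\alpha\neq\bar\beta$, one $f_I$ could be the $\alpha$-stabilization and another the $\beta$-stabilization, with incongruent $U_\gerl$-eigenvalues; similarly two lifts of the same $\rhobar|_{G_{F_\gerl}}$ can have different Weil--Deligne representations (say, one special and one principal series), so the appeal to $\WD(\rho_{f_I}|_{G_{F_\gerl}})$ gives no congruence. Your fallback --- enlarging $N$ and ``choosing a common root of the mod $p$ Hecke polynomial at each such prime'' --- is also not available in general: when an $f_I$ is new at $\gerl$ there is no stabilization freedom, and when the local components differ in type there need not be a meaningful common Hecke polynomial to take roots of.

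The paper resolves this with the standard, and more brutal, device: by Proposition 2.3 of \cite{Shi78} one may replace each $f_I$ by an eigenform of level $\Gamma_1(N)$ with $N=M^2$ on which $T_\gerl f_I=0$ for every $\gerl\mid N$, so that at level primes all the eigenvalues are literally zero and condition (3) holds trivially, while conditions (1), (2) and the congruences at $\gerl\nmid Np$ are unaffected. Note that condition (3) at $\gerl\mid N$ cannot simply be dropped or finessed, since the $q$-expansion computation in Lemma \ref{Lemma: q-expansion} uses coefficients $c(\germ,f_I)$ for arbitrary ideals $\germ$ prime to $p$, including those divisible by primes dividing $N$. So your proposal needs the kill-the-Euler-factor step (or an equivalent) to be correct; as written, the local--global compatibility argument and the stabilization fallback do not deliver the required congruences.
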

\begin{proof} Let $M$ be as in the statement of Theorem \ref{thm: lifting to weight p by blgg}.  The Theorem guarantees the existence of forms $f_I$ of weight $p$ and level $\Gamma_1(M)$ satisfying the first two conditions.  Since $\rhobar_{f_I}\cong\rhobar$,
    we see that condition (3) is held for all $\gerl nmid Np$. Finally,  by a standard argument, using Proposition 2.3
    of \cite{Shi78}, we can furthermore change the $f_I$'s  (at the possible
    cost of increasing the level from $M$ to $N=M^2$) so that for each prime $\gerl|N$, we have $T_\gerl f_I=0$ for all $I$.  
    \end{proof}

\section{Hilbert Modular Varieties And Modular Forms}\label{section: HMV}

\subsection{Hilbert Modular Varieties of tame level} \label{subsection: HMV} Let $p$ be a prime number.
Let $F/\mathbb{Q}$ be a totally real field of degree $d$, $\mathcal{O}_F$ its ring of integers, and $\gerd_F$ its different ideal.  Let $\mathbb{S}$ be the set of all primes ideals of $\of$ that divide $p$.
For any $\gerp \in \mathbb{S}$, we let $e_\gerp$ denote the ramification index.  We also  let $\FF_\gerp = \of/\gerp$, a finite field of degree $f_\gerp$ over $\FF_p$. Let  $\FF$ denote a finite field with $p^f$ elements, where  $f =\lcm\{f_\gerp: \gerp\vert p\}$. 

 For every $\gerp\in\mathbb{S}$, we let $F_\gerp$ be the completion of $F$ at $\gerp$,  $\calO_\gerp$ its ring of integers, $F_\gerp^{ur}/\QQ_p$ its maximal uramified subextension, and $\calO_\gerp^{ur}$ the ring of integers in $F_\gerp^{ur}$. Fix once and for all a  choice of a uniformizer $\varpi_\gerp \in F_\gerp$ for each $\gerp\in\mathbb{S}$. Let 
 \[
 \mathbb{B}=\textstyle\bigsqcup_{\gerp\in\mathbb{S}}
\mathbb{B}_\mathfrak{p},\] where 
$\mathbb{B}_\mathfrak{p}={\rm Emb}(F_\gerp^{ur},\Qpbar)={\rm Hom}(\FF_\gerp,\FF)$. Let $\Sigma_\gerp={\rm Emb}(F_\gerp,\Qpbar)$. There is a restriction map $res:\Sigma_\gerp \ra \BB_\gerp$ which is $e_\gerp$-to-$1$.

Let $\sigma$ denote the Frobenius automorphism of $W(\FF)$, lifting $x \mapsto
x^p$ modulo $p$. It acts on $\mathbb{B}$ via $\beta \mapsto \sigma
\circ \beta$, and transitively on each $\mathbb{B}_\mathfrak{p}$.
For $S \subseteq \mathbb{B}$, we let
\[\ell(S)=\{\sigma^{-1}\circ\beta\colon \beta \in S\}
\]
\[
r(S)=\{\sigma\circ\beta\colon \beta \in S\},
\]
\[S^c=\mathbb{B} - S.\]

The decomposition \[\mathcal{O}_F
\otimes_\mathbb{Z} W(\FF)=\prod_{p|\gerp} \calO_{F_\gerp} \otimes_{\ZZ_p} W(\FF)=\prod_{p|\gerp}\prod_{\beta \in \mathbb{B_\gerp}}  \calO_{F_\gerp} \otimes_{ \calO_{\gerp}^{ur},\beta} W(\FF)
\]
induces a decomposition,
\[
M=\bigoplus_{\beta\in \mathbb{B}} M_\beta,
\] 
on any $\mathcal{O}_F
\otimes_\mathbb{Z} W(\FF)$-module $M$, where, if $\beta \in \BB_\gerp$, then $\calO_{\gerp}^{ur}$ acts on $M_\beta$ via $\beta:\calO_{\gerp}^{ur}=W(\FF_\gerp) \ra W(\FF)$.

Let $N>3$ be an integer prime to $p$. Let $X/W(\FF)$ be the Hilbert modular scheme classifying Hilbert-Blumenthal abelian schemes (HBAS's)
$\underline{A}/S=(A/S,\iota,\lambda,\alpha)$, where 

\begin{itemize}
\item $S$ is a locally noetherian $W(\FF)$-scheme
\item $A$ is an abelian scheme of relative dimension $d$ over $S$, equipped with
real multiplication $\iota\colon \of \rightarrow \End_S(A)$,
\item  $\lambda$ is a polarization as in \cite{DP94},
namely, over each connected component $T$ of $S$, an isomorphism $\lambda\colon (\calP_{A|_T}, \calP_{A|_T}^+)
\rightarrow (\gera, \gera^+)$ for some representative $(\gera,
\gera^+)\in [\Cl^+(L)]$ (depending on $T$) such that $A|_T \otimes_\ol \gera \cong
A|_T^\vee$. Here, $\calP_A = \Hom_\of(A, A^\vee)^{\rm
sym}$, viewed as sheaf of projective $\of$-modules of rank one in the \'etale topology, and $\calP^+_A$ is the cone of polarizations.
\item $\alpha$ is a rigid
$\Gamma_{00}(N)$-level structure,  that is, $\alpha\colon \mu_N
\otimes_{\ZZ} \gerd_F^{-1} \arr A$ is an $\of$-equivariant
closed immersion of group schemes.
\end{itemize}

The Hilbert modular scheme $X$ is a normal scheme of relative dimension $d$ over $\Spec(W(\FF))$. Let $\Xbar=X \otimes_{W(\FF)} \FF$ be the special fibre, viewed as a closed subscheme of $X$. It represents the same functor restricted to the subcategory of locally noetherian $\FF$-schemes.  Let $\tX$ denote a toroidal compactification of $X$, and $\tXbar$ be its special fibre. For a fractional ideal $\gera$ of $\of$, we denote by $X_\gera$ the locus in $X$ where the polarization module is isomorphic to $(\gera,\gera^+)$. Then $X_\gera$ is an irreducible component of $X$, and every irreducible component of $X$ is of the form $X_\gera$ for some fractional ideal $\gera$. We define $\Xbar_\gera$ similarly. We define $\tX_\gera$, $\tXbar_\gera$, respectively, by taking the Zariski closure of $X_\gera$, $\Xbar_\gera$ in their toroidal compactifications.

Let $\epsilon: {\mathcal A}_X \arr X$ be the universal abelian scheme. Let 
 \[
 \Omega=\epsilon_\ast \Omega^1_{{\mathcal A}_X/X}
\]
\[
\HH=\epsilon_\ast H^1_{dR}({{\mathcal A}_X/X}),
\]
which are locally free sheaves of rank, respectively, $d,2d$ on $X$. We denote the restriction of these sheaves to $\Xbar$ by the same notation.  As explained above these $(\of \otimes_\ZZ W(\FF))-$modules decompose as 
\[
\Omega=\bigoplus_{\beta\in \mathbb{B}} {\Omega}_\beta,
\]
\[
\HH=\bigoplus_{\beta\in \mathbb{B}} {\HH}_\beta,
\]
where $\Omega_\beta$, $\HH_\beta$ are locally free sheaves of rank, respectively,  $e_\gerp$, $2e_\gerp$, if $\beta \in \BB_\gerp$. We define
\[
\omega_\beta:={\Omega}_\beta/\varpi_\gerp{\Omega}_\beta,
\]
\[
\calH_\beta=\HH_\beta/\varpi_\gerp {\HH}_\beta,
\]
where $\beta \in \BB_\gerp$. Again, we use the same notation to denote the restriction of these sheaves to $\Xbar$. These sheaves extend naturally to the toroidal compactifications  of $X$ and $\Xbar$, and we denote the extensions by the same notation. For each $\beta \in \BB$, $\calH_\beta$ is a locally free sheaf of rank $2$ over $\tXbar$. Similarly for any $\uA/S$ classified by $X$, we can define $\Omega_{A,\beta}$, $\HH_{A,\beta}$, $\omega_{A,\beta}$, $\calH_{A,\beta}$ to be $\calO_S$-modules.

Let $\XR$ denote the locus of points $\uA/S$ on $X$ which satisfy the Rapoport condition, namely, where $\Omega$ is locally free of rank one as a module over $\calO_F \otimes_{\ZZ} \calO_S$. The Rapoport locus can be shown to be the open subscheme of $X$ on which it is smooth over $\Spec(W(\FF))$. Let $\XbarR=\Xbar \cap \XR$. Then, $\Xbar-\XbarR$ has codimension at least  $2$ (and is empty if $p$ is unramified in $F$).  We denote by $\tXbarR$ the union of $\XbarR$ and the cuspidal locus of $\Xbar$. For a fractional ideal $\gera$ of $\of$, we let $\tXbarR_\gera=\tXbarR \cap \tXbar_\gera$.

The Rapoport condition is equivalent to $\omega_\beta$ being a locally free sheaf of rank one for all $\beta \in \BB$. In fact, over $\tXbarR$, each $\omega_\beta$ is a  locally free subsheaf of $\calH_\beta$ of rank one.

\begin{defn} Let $R$ be an $\FF$-algebra. The space of mod $p$ Hilbert modular forms of parallel weight $k\in \ZZ$, and level $\Gamma_{1}(N)$ over $R$ is defined to be
\[
M_k(N;R)=H^0(\Xbar \otimes_{\FF} R, (\wedge^d\ \Omega)^k).
\]
Since $d>1$, the Koecher principle shows that $M_k(N;R)=H^0(\tilde{\Xbar} \otimes_{\FF} R, (\wedge^d\ \Omega)^k)$. We can recast this definition in terms of the $\omega_\beta$'s.
\end{defn}
 
We can recast the above definition in terms of the $\omega_\beta$'s:

\begin{lem} \label{Lemma: sections} Let $k \in \ZZ$. We have
\[
M_k(N,R) \cong H^0(\XbarR \otimes_{\FF} R,\displaystyle{\bigotimes_{\gerp|p}\bigotimes_{\beta\in \BB_\gerp} }\omega_\beta^{e_\gerp k})=H^0(\tXbarR \otimes_{\FF} R,\displaystyle{\bigotimes_{\gerp|p}\bigotimes_{\beta\in \BB_\gerp} }\omega_\beta^{e_\gerp k}).
\]

\end{lem}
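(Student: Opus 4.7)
The proof splits into two steps: (i) constructing an isomorphism of line bundles on $\XbarR$ between $(\wedge^d\Omega)^k$ and $\bigotimes_{\gerp\vert p}\bigotimes_{\beta\in\BB_\gerp}\omega_\beta^{e_\gerp k}$, and (ii) checking that the global sections do not change when passing among $\Xbar\otimes R$, $\XbarR\otimes R$, and $\tXbarR\otimes R$.

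For (i), I would argue locally on $\XbarR$ and exploit the Rapoport condition together with the ramified structure of $\of\otimes\calO_{\XbarR}$. For $\beta\in\BB_\gerp$, the local factor $\calO_\gerp\otimes_{\calO_\gerp^{ur},\beta}\calO_{\XbarR}$ is isomorphic to $\calO_{\XbarR}[T]/(T^{e_\gerp})$: the minimal polynomial of $\varpi_\gerp$ over $\calO_\gerp^{ur}$ is Eisenstein, so all its non-leading coefficients lie in $p\calO_\gerp^{ur}$ and reduce to zero modulo $p$. Under the Rapoport hypothesis, $\Omega_\beta$ is locally free of rank one over this factor, so multiplication by $\varpi_\gerp$ is nilpotent on $\Omega_\beta$ with order exactly $e_\gerp$. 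The $\varpi_\gerp$-adic filtration
\[
\Omega_\beta \supset \varpi_\gerp\Omega_\beta \supset \cdots \supset \varpi_\gerp^{e_\gerp-1}\Omega_\beta \supset 0
\]
has $e_\gerp$ successive quotients, each an $\calO_{\XbarR}$-line bundle canonically identified with $\omega_\beta$ via multiplication by $\varpi_\gerp^i$. Taking determinants gives $\det_{\calO_{\XbarR}}\Omega_\beta \cong \omega_\beta^{\otimes e_\gerp}$, and since $\wedge^d\Omega = \bigotimes_{\beta\in\BB}\det\Omega_\beta$, one obtains the desired isomorphism after raising to the $k$-th power. The isomorphism is natural, so it survives base change to $R$.

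For (ii), the restriction map $H^0(\Xbar\otimes R,(\wedge^d\Omega)^k)\to H^0(\XbarR\otimes R,(\wedge^d\Omega)^k)$ is bijective because $\Xbar\setminus\XbarR$ has codimension $\geq 2$ in $\Xbar$ and the Deligne--Pappas model $\Xbar$ is Cohen--Macaulay (preserved under the flat base change $\Xbar\otimes_\FF R$), so a Hartogs-type extension for the locally free sheaf $(\wedge^d\Omega)^k$ applies. The further equality with sections over $\tXbarR\otimes R$ is the Koecher principle, already used to define $M_k(N;R)$. The one nonroutine input is the Cohen--Macaulay / $S_2$ property of $\Xbar$ and its behaviour under base change; granting this, the geometric heart of the lemma is the local filtration argument in step~(i).
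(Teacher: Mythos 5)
Your argument is essentially the paper's: the same $\varpi_\gerp$-adic filtration of $\Omega_\beta$ over the Rapoport locus, with the Rapoport condition forcing each graded piece to be a line bundle isomorphic to $\omega_\beta$, so that $\wedge^{e_\gerp}\Omega_\beta\cong\omega_\beta^{e_\gerp}$, combined with extension of sections across the codimension-$\geq 2$ complement of $\XbarR$ and the Koecher principle. The only (harmless) deviation is that you invoke the Cohen--Macaulay/$S_2$ property of the Deligne--Pappas model for the Hartogs step where the paper simply cites normality of $\Xbar$; both yield the needed depth condition.
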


\begin{proof}
 Since $\Xbar$ is normal, and $\Xbar-\XbarR$ has codimension at least  $2$, we have $M_k(N,R)=H^0(\XbarR \otimes_{\FF} R, (\wedge^d\ \Omega)^k)$. We have $$\wedge^d \Omega \cong \bigotimes_{\gerp|p}\bigotimes_{\beta\in \BB_\gerp} \wedge^{e_\gerp} \Omega_\beta.$$ It is enough to show that $\wedge^{e_\gerp} \Omega_\beta \cong \omega_\beta^{e_\gerp}$ on $\XbarR$. Over $\XbarR$, we have a filtration on $\Omega_\beta$
 \[
 0=\varpi_\gerp^{e_\gerp}\Omega_\beta \subset \varpi_\gerp^{e_\gerp-1}\Omega_\beta \subset \cdots \subset \varpi_\gerp\Omega_\beta \subset \Omega_\beta.
\]
 Since on  $\XbarR$ the sheaf $\omega_\beta=\Omega_\beta/\varpi_\gerp \Omega_\beta$ is locally free of rank one, all the graded parts of the above filtration are isomorphic to $\omega_\beta$. This shows that $\wedge^{e_\gerp} \Omega_\beta \cong \omega_\beta^{e_\gerp}$, and completes the proof of the lemma.
\end{proof}

An example  of a mod $p$ Hilbert modular form is given by the Hasse invariant defined as follows. Let $\overline{\calA}/\Xbar$ be the universal HBAS over $\Xbar$. Let 
\[
V:\calA^{(p)} \rightarrow \calA
\]
be the Verschiebung morphism. Then $V^*: \wedge^d\ \Omega \rightarrow  (\wedge^d\ \Omega)^{(p)}$ defines a section of $(\wedge^d\ \Omega)^{p-1}$ on $\Xbar$ which we call the Hasse invariant and denote by $H$. It can also be defined in terms of the partial Hasse invariant defined as follows.

The morphism $V^*: \Omega \rightarrow  \Omega^{(p)}$ decomposes as 
\[
V^*=\oplus_{\beta \in \BB} V_\beta^*,
\]
 where the components are morphisms  $V^*_\beta: \Omega_\beta \rightarrow \Omega^{(p)}_{\sigma^{-1}\circ\beta}$. Let $$\Vbar^*_\beta: \omega_\beta \rightarrow \omega^{(p)}_{\sigma^{-1}\circ\beta}$$ denote the reduction of $V^*_\beta$ module $\pi_\gerp$, where $\beta \in \BB_\gerp$. Then $\Vbar^\ast_\beta$ defines a section of $\omega^{p}_{\sigma^{-1}\circ\beta}\otimes \omega^{-1}_\beta$ on $\Xbar$ which is denoted $H_\beta$.

\begin{lem} We have $H=\displaystyle{\prod_{\gerp|p}\prod_{\beta \in \BB_\gerp} H_\beta^{e_\gerp}}$.

\end{lem}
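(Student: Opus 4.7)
The plan is to verify the identity on the Rapoport locus $\XbarR$, where both sides become sections of the same line bundle, and then extend to $\Xbar$ using that $\Xbar$ is normal and $\Xbar\setminus\XbarR$ has codimension at least two. On $\XbarR$, Lemma~\ref{Lemma: sections} identifies $(\wedge^d\Omega)^{p-1}$ with $\bigotimes_{\gerp\vert p}\bigotimes_{\beta\in\BB_\gerp}\omega_\beta^{e_\gerp(p-1)}$; the product $\prod_{\gerp,\beta}H_\beta^{e_\gerp}$ is naturally a section of the same line bundle, once one reindexes $\beta\mapsto\sigma^{-1}\circ\beta$ on each $\BB_\gerp$ (since $\sigma$ acts there as a bijection).

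I would then exploit the $\calO_F$-linearity of $V^*$ to obtain the decomposition $V^*=\bigoplus_\beta V^*_\beta$, with $V^*_\beta\colon\Omega_\beta\to(\Omega_{\sigma^{-1}\circ\beta})^{(p)}$, so that the top exterior power $\wedge^d V^*$, which computes $H$, factors as the tensor product
\[
\wedge^d V^*\;=\;\bigotimes_{\gerp\vert p}\,\bigotimes_{\beta\in\BB_\gerp}\wedge^{e_\gerp}V^*_\beta.
\]
The task then reduces to showing that each factor $\wedge^{e_\gerp}V^*_\beta$, viewed through the identifications of Lemma~\ref{Lemma: sections}, equals $H_\beta^{e_\gerp}$.

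The key local calculation takes place on $\XbarR$, where $\Omega_\beta$ carries the $\varpi_\gerp$-adic filtration
\[
0=\varpi_\gerp^{e_\gerp}\Omega_\beta\subset\varpi_\gerp^{e_\gerp-1}\Omega_\beta\subset\cdots\subset\varpi_\gerp\Omega_\beta\subset\Omega_\beta,
\]
whose $i$-th graded piece is identified with $\omega_\beta$ via multiplication by $\varpi_\gerp^i$. Since $V^*_\beta$ is $\calO_F$-linear, it commutes with $\varpi_\gerp$, hence preserves the filtration, and under the powers-of-$\varpi_\gerp$ identifications each of the $e_\gerp$ graded-piece maps is the reduction $\Vbar^*_\beta\colon\omega_\beta\to\omega_{\sigma^{-1}\circ\beta}^{(p)}$. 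Taking the top exterior power therefore gives $\wedge^{e_\gerp}V^*_\beta=(\Vbar^*_\beta)^{\otimes e_\gerp}=H_\beta^{e_\gerp}$, and multiplying over all $\gerp$ and $\beta\in\BB_\gerp$ yields the lemma. The main delicate point is the careful bookkeeping of Frobenius twists on the target: one must verify that the filtration on $(\Omega_{\sigma^{-1}\circ\beta})^{(p)}$ and its graded-piece identifications are precisely the Frobenius twists of those on $\Omega_{\sigma^{-1}\circ\beta}$, so that the induced graded-piece maps truly are $\Vbar^*_\beta$ rather than some Frobenius variant of it.
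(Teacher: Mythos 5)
Your proof is correct and follows essentially the same route as the paper's: the paper likewise reduces to $\XbarR$, uses the $\varpi_\gerp$-adic filtration from the proof of Lemma~\ref{Lemma: sections}, and observes that $V^*_\beta$ induces $\Vbar^*_\beta$ on each graded piece, so that $\wedge^{e_\gerp}V^*_\beta$ corresponds to $H_\beta^{e_\gerp}$. The Frobenius-twist bookkeeping you flag is exactly the point the paper leaves implicit, and it causes no trouble on the Rapoport locus where all sheaves involved are locally free.
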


\begin{proof} This can be seen just as in the proof of Lemma \ref{Lemma: sections}. All we need to note is that over $\XbarR$,  the  morphism induced by  $V_\beta^*$ on each graded part of the filtration given in the proof of Lemma \ref{Lemma: sections} is the same as $\Vbar_\beta^*$ after identifying the graded part with $\omega_\beta$.

\end{proof}

We consider the  Goren-Oort stratification on $\tXbarR$ defined as follows. For any $T \subset \BB$, we define 
\[
Z_T=V(h_\beta: \beta \in T),
\]
where $V(h_\beta: \beta \in T)$ denotes the closed subscheme given by the vanishing of the ideal generated by $H_\beta$'s for $\beta \in T$. For $\beta \in \BB$, we denote $Z_{\{\beta\}}$ by $Z_\beta$.
Each $Z_T$ is non-empty, nonsingular, and equi-dimensional of dimension $d - |T|$. The divisor of $H$ on $\tXbarR$ is $\displaystyle\sum_{\gerp \in \mathbb{S}}\displaystyle\sum_{\beta \in \BB_\gerp}e_\gerp Z_{\beta}$, where the $Z_\beta$'s are normal crossing divisors \cite[Corollary 8.18]{AG05}.

The ordinary locus in $\tXbarR$ equals $\tXbar^{\rm ord}=\tXbarR-\bigcup_{\beta \in \BB} Z_\beta$. For a fractional ideal $\gera$ of $\of$, we define $\tXbar^{\rm ord}_\gera=\tXbar^{\rm ord} \cap \tXbar_\gera$.

\subsection{Hilbert Modular Varieties with Iwahori level} \label{subsection: IwahoriHMV}

Let notation be as in the previous section, and fix $\gerp_0$ a prime of $\of$ above $p$. Let $Y=Y(\gerp_0)/W(\FF)$ be the Hilbert modular scheme classifying the isomorphism classes of $(\delta\colon\uA
\arr \uA')$, where 

\begin{itemize}
\item $\uA/S = (A/S, \iota_A, \lambda_A, \alpha_A)$, $\uA'/S =
(A'/S, \iota_{A'}, \lambda_{A'}, \alpha_{A'})$ are classified by $X$;

\item $\delta$ is an $\of$-isogeny of degree $p^{f_{\gerp_0}}$ with kernel contained in $A[\gerp_0]$, such that $\delta^\ast \calP_{A'}= \gerp_0\calP_A$.
\end{itemize}

The Hilbert modular scheme $Y$ is a normal scheme of relative dimension $d$ over $\Spec(W(\FF))$. Let $\Ybar=Y \otimes_{W(\FF)} \FF$ be the special fibre.   Let $\tY$ denote a toroidal compactification of $Y$, and $\tYbar$ be its special fibre. We denote by the universal object on $Y$ by $\delta: \calA_Y \rightarrow \calA'_Y$. Since kernel of $\delta$ is contained in $\calA_Y[\gerp_0]$, we can construct an isogeny 
\[
\delta':\calA'_Y\cong \calA_Y/{\rm Ker}(\delta) \ra \calA_Y/\calA_Y[\gerp_0]\cong \calA_Y \otimes_\of \gerp_0^{-1}.
\]
It is easy to see that $\delta' \circ \delta: \calA_Y \ra \calA_Y \otimes_\of \gerp_0^{-1}$ is the natural map induced by the inclusion of $\of$ in $\gerp_0^{-1}$. Similarly, $\delta \circ (\delta' \otimes_\of \gerp_0): \calA'_Y \otimes_\of \gerp_0 \ra \calA'_Y$ is the natural map induced by the inclusion of $\gerp_0$ in $\calO_F$. In particular, for any point $(\delta: \uA \ra \uA')$ on $Y$, there is $\delta': \uA' \ra \uA\otimes_{\of} \gerp_0^{-1}$ satisfying a similar property.

There are two  maps
\[
\pi_{1},\pi_2:\tYbar \ra \tXbar
\]
where on the noncuspidal locus $\pi_1(\delta\colon \uA\ra \uA')=\uA$, and $\pi_2(\delta\colon \uA\ra \uA')=\uA'$. 

For any $\beta \in \BB$, let 
\[
\rm{pr}_\beta^\ast: \pi_2^\ast {\omega_\beta} \rightarrow \pi_1^\ast{\omega_\beta}
\]
 denote the map  induced by pulling back differential one forms under the natural projection  $\delta: \calA_Y \rightarrow \calA_Y'$.

We now define a stratification on $\tYbarR$. We first define it on the noncuspidal locus  $\YbarR$. Let $Q=(\delta\colon \uA\ra \uA')$ be a closed point on $\YbarR$. Recall the map $\delta':\uA' \ra \uA \otimes_\of \gerp_0^{-1}$ constructed above. The pullback morphisms $\delta^*: \Omega_{A'} \ra \Omega_A$ and $\delta'^*:\Omega_{A\otimes_{\of} \gerp_0^{-1}}\ra \Omega_{A'}$ induce morphisms
\begin{align}
\delta_\beta^*  \colon \omega_{A',\beta}& \ra \omega_{A,\beta},
\\\nonumber
{\delta'}_\beta^*  \colon \omega_{A\otimes_\of \gerp_0^{-1},\beta}&\ra \omega_{A',\beta}.
\end{align}

We define
\begin{align}
\varphi(Q)&=\{\beta \in \BB_{\gerp_0}: {\delta}_{\sigma^{-1}\circ\beta}^*=0 \},
\\\nonumber
\eta(Q)&=\{\beta \in \BB_{\gerp_0}: {\delta'}_\beta^*=0 \}.
\end{align}
Note that since $(\delta'\circ\delta)^*=0$, we have $\ell(\varphi(Q)) \cup \eta(Q)=\BB_{\gerp_0}$.

\begin{prop}\label{Proposition: stratification}
Let $\varphi, \eta$ be subsets of $\BB_{\gerp_0}$ such that $\ell(\varphi) \cup \eta=\BB_{\gerp_0}$.

\begin{enumerate}
 \item There is a locally closed subset $W_{\varphi,\eta}$
of $\tYbarR$ with the following property: a closed point $Q$ of
$\tYbarR$ lies in $W_{\varphi,\eta}$ if and only if $\varphi(Q)=\varphi$, and $\eta(Q)=\eta$.
Moreover, the subset
\[ Z_{\varphi,\eta} = \bigcup_{\varphi'\supseteq\varphi , \eta'\supseteq\eta} W_{\varphi', \eta'}\]
is closed.
\item $W_{\varphi,\eta}$ is non-empty, and its Zariski closure is $Z_{\varphi,\eta}$. The
collection $\{W_{\varphi,\eta}\}$ is a stratification of $\tYbarR$ by $3^{f_{\gerp_0}}$ strata.
\item $W_{\varphi,\eta}$ and $Z_{\varphi,\eta}$ are nonsingular, equi-dimensional, and
\[ \dim (W_{\varphi,\eta}) = \dim(Z_{\varphi,\eta}) = d+f_{\gerp_0} - (|\varphi| + |\eta|).\]
\item The irreducible components of $\tYbarR$ are the irreducible
components of the strata $Z_{\varphi, \ell(\varphi^c)}$ for $\varphi
\subseteq \BB$.
\item $\pi_1(Z_{\varphi,\eta})=Z_{\varphi \cap \eta}$, and the map $\pi_1: Z_{\varphi,\eta} \ra Z_{\varphi \cap \eta}$ is surjective. In fact, every fibre of this map is homeomorphic to $(\PP^1)^{|\varphi|\cap|\eta|}$.

\end{enumerate}
\end{prop}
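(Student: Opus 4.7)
The plan is to analyze the local structure of $\tYbarR$ near a closed point via the relation $\delta'\circ\delta=[\varpi_{\gerp_0}]$, from which parts (1)--(5) will follow by a combination of elementary arguments on vanishing loci of sections, a Pappas--Rapoport splitting local model, and a fibrewise analysis of $\pi_1$.

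The starting observation is that the composition $\delta'\circ\delta\colon\calA_Y\to\calA_Y\otimes_\of\gerp_0^{-1}$ is the natural inclusion, which on differentials is multiplication by $\varpi_{\gerp_0}$ and hence vanishes mod $p$ on the $\gamma$-component for every $\gamma\in\BB_{\gerp_0}$. Since on $\tYbarR$ the sheaves $\omega_{A,\gamma}$ and $\omega_{A',\gamma}$ are invertible, at each closed point one of three mutually exclusive cases occurs at every $\gamma\in\BB_{\gerp_0}$: (a) $\delta^*_\gamma=0$ and $\delta'^*_\gamma$ nonzero, (b) $\delta'^*_\gamma=0$ and $\delta^*_\gamma$ nonzero, or (c) both vanish. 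This enforces the constraint $\ell(\varphi(Q))\cup\eta(Q)=\BB_{\gerp_0}$ and accounts for exactly $3^{f_{\gerp_0}}$ possible stratum labels. Part (1) is then a direct consequence: each $\delta^*_\gamma$ and each $\delta'^*_\gamma$ is a section of an invertible sheaf on $\tYbarR$, so its zero locus is a Cartier divisor, $Z_{\varphi,\eta}$ is the intersection of such divisors (hence closed), and $W_{\varphi,\eta}$ is the open subset of $Z_{\varphi,\eta}$ where no further section vanishes.

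For parts (2) and (3) I would compute the local structure of $\tYbarR$ at $Q$ using a Pappas--Rapoport splitting local model. On the Rapoport locus, the completed local ring at $Q$ decomposes as a tensor product over $\beta\in\BB$ of local factors: for $\beta\notin\BB_{\gerp_0}$ the factor is smooth of the expected dimension, while for $\beta\in\BB_{\gerp_0}$ the factor is a node of the form $\Spec k[[u_\beta,v_\beta]]/(u_\beta v_\beta)$, with $u_\beta=0$ corresponding to case (a) and $v_\beta=0$ to case (b). Each condition in the definition of $W_{\varphi,\eta}$ imposes the vanishing of one of $u_\beta,v_\beta$; using $\ell(\varphi)\cup\eta=\BB_{\gerp_0}$ one obtains non-emptiness, smoothness, and
\[
\dim W_{\varphi,\eta}=\dim Z_{\varphi,\eta}=d+f_{\gerp_0}-(|\varphi|+|\eta|).
\]
Part (4) then follows immediately: the strata $Z_{\varphi,\ell(\varphi^c)}$ correspond to picking one smooth branch of every node, have dimension $d$, and cover $\tYbarR$, so their irreducible components exhaust those of $\tYbarR$.

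For (5), the inclusion $\pi_1(Z_{\varphi,\eta})\subseteq Z_{\varphi\cap\eta}$ follows from expressing the partial Hasse invariant $H_\beta(\uA)$ for $\beta\in\varphi\cap\eta$ as a product involving the components of $\delta^*$ and $\delta'^*$, via the compatibility of Verschiebung with the isogeny $\delta$; conversely, given $\uA\in Z_{\varphi\cap\eta}$, a lift to $Z_{\varphi,\eta}$ amounts to prescribing, at each $\beta\in\varphi\cap\eta$, a rank-one line in the two-dimensional $\beta$-part of the Dieudonn\'e module (to serve as the kernel of $\delta$), which gives a $\PP^1$ of choices, while at the remaining embeddings the kernel is rigid; this yields surjectivity and the stated $(\PP^1)^{|\varphi\cap\eta|}$-fibre structure. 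The main obstacle is the local model calculation above: in the unramified setting of \cite{GeKa13} the Dieudonn\'e pieces at $\beta\in\BB_{\gerp_0}$ are already two-dimensional and the analysis is elementary, but in the ramified case they have dimension $2e_{\gerp_0}$, so one must invoke the Pappas--Rapoport splitting (available precisely on the Rapoport locus, where all $\omega_\beta$ are line bundles) to reduce to the tensor-product-of-nodes picture.
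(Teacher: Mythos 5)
Your overall strategy is the one the paper itself gestures at (the proof in the paper is essentially a reduction to the deformation-theoretic analysis of \cite{GoKa12}, with $\HH_\beta$ replaced by $\calH_\beta$, or to the Pappas--Rapoport model computations of \cite{ERX}), but the execution has a genuine gap exactly at the point where ramification enters. Your local model asserts that at a closed point of $\tYbarR$ the completed local ring is a completed tensor product with, at \emph{every} $\beta\in\BB_{\gerp_0}$, a one-dimensional nodal factor $k[[u_\beta,v_\beta]]/(u_\beta v_\beta)$. This is dimensionally impossible when $e_{\gerp_0}>1$: it would give $\dim\tYbarR=\sum_{\gerp\neq\gerp_0}e_\gerp f_\gerp+f_{\gerp_0}<d$, and it is also inconsistent with the formula $\dim Z_{\varphi,\eta}=d+f_{\gerp_0}-(|\varphi|+|\eta|)$ you are trying to prove (your model would make the deepest stratum have codimension $e_{\gerp_0}f_{\gerp_0}$ rather than $f_{\gerp_0}$). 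The correct picture has a factor of dimension $e_{\gerp_0}$ at each $\beta\in\BB_{\gerp_0}$, which is smooth at points where at most one of $\delta^*_\beta$, ${\delta'}^*_\beta$ vanishes and is a node times an $(e_{\gerp_0}-1)$-dimensional smooth factor where both vanish; the extra smooth directions come precisely from the Pappas--Rapoport filtration on $\Omega_\beta$, and the two branches of the node must be \emph{identified} with the vanishing loci of $\delta^*_\beta$ and ${\delta'}^*_\beta$. That identification is the actual content of the deformation-theoretic computation (Theorem 2.5.2 and \S 2.6 of \cite{GoKa12}, or \S 3.4--3.5 of \cite{ERX}); in your write-up it is asserted rather than proved. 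Note also that non-emptiness of each $W_{\varphi,\eta}$ does not follow from a local-ring computation at a hypothetical point: you must first exhibit points of the deepest stratum (e.g.\ by a Dieudonn\'e-theoretic construction of suitable $\gerp_0$-divisible groups), after which the nodal local structure propagates non-emptiness to all strata through such a point.

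Part (5) is also not established by your argument. Prescribing, independently at each $\beta\in\varphi\cap\eta$, a line in a two-dimensional space ignores that the kernel of $\delta$ must be stable under $\of$, $F$ and $V$, and $F,V$ shift $\beta$ to $\sigma^{\pm 1}\circ\beta$, so the choices at different embeddings are not independent; moreover in the ramified case the two-dimensional object is $\calH_\beta=\HH_\beta/\varpi_{\gerp_0}\HH_\beta$, not the $\beta$-component of the Dieudonn\'e module, which has dimension $2e_{\gerp_0}$. A simple dimension check shows the naive product-of-$\PP^1$'s picture cannot be right in general: since $\dim Z_{\varphi\cap\eta}=d-|\varphi\cap\eta|$, comparison with part (3) forces the generic fibre of $\pi_1\colon Z_{\varphi,\eta}\to Z_{\varphi\cap\eta}$ to have dimension $f_{\gerp_0}-|\varphi\cup\eta|$, which already in the case $f_{\gerp_0}=1$, $\varphi=\eta=\BB_{\gerp_0}$ is $0$, not $|\varphi\cap\eta|$. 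So the fibre analysis has to be carried out carefully (as in \S 2.6 of \cite{GoKa12}), both to get surjectivity in families (not just a pointwise lift over closed points) and to get the correct description of the positive-dimensional fibres; your sketch would ``prove'' a statement that the dimension count rules out. Finally, the extension of the strata across the cuspidal locus of $\tYbarR$ (needed since the proposition is stated for the compactification) is not addressed, though this is a comparatively minor point handled by the Tate objects of \S\ref{section: q-exp}.
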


\begin{proof} The stratification defined above is a direct generalization of the one defined and studied in \cite{GoKa12}  on $\tYbar$ in the case $p$ is unramified in $\of$ to the Rapoport locus in the general case. One can prove the above results (and much more) directly generalizing the proofs in \cite{GoKa12}, essentially by replacing $\HH_\beta$ with $\calH_\beta$ in the study of the deformation theory in the general case (See Theorem 2.5.2, and \S 2.6 in {\it loc. cit.}). 

Alternatively, one can appeal to \S 3.4, 3.5 of \cite{ERX} where the stratification in \cite{GoKa12} and some of its properties have been generalized to the Pappas-Rapoport model of the Hilbert modular variety $\tYbar^{\rm PR}$ that maps to $\tYbar$ via a surjective morphism which is an isomorphism on the Rapoport locus. For reference, $Z_{\varphi,\eta}$ is denoted by $Y_{S,S'}$ (intersected with the Rapoport locus), with $S=res^{-1}(\ell(\varphi))$, $S'=res^{-1}(\eta)$ (intersected with the Rapoport locus), where $res:\Sigma_\gerp \ra \BB_\gerp$ is the restriction map.
\end{proof}

\begin{cor} \label{Corollary: pi_1 inverse}The morphism $\pi_1:Z_{\emptyset,\BB_{\gerp_0}} \ra \tXbarR$ is surjective, and establishes a bijection between the irreducible components of $Z_{\emptyset,\BB_{\gerp_0}}$ and $\tXbarR$.
\end{cor}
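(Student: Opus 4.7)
The plan is to read the corollary off from parts~(3) and~(5) of Proposition~\ref{Proposition: stratification} specialized to $(\varphi,\eta)=(\emptyset,\BB_{\gerp_0})$, which clearly satisfies the standing constraint $\ell(\varphi)\cup\eta=\BB_{\gerp_0}$. Part~(5) gives $\pi_1(Z_{\emptyset,\BB_{\gerp_0}})=Z_{\emptyset\cap\BB_{\gerp_0}}=Z_\emptyset=\tXbarR$, since $Z_\emptyset$ is the vanishing locus of the empty collection of partial Hasse invariants. The same part~(5) also asserts that each fibre of this restriction of $\pi_1$ is homeomorphic to $(\PP^1)^{|\emptyset\cap\BB_{\gerp_0}|}=(\PP^1)^0$, a single point, so $\pi_1|_{Z_{\emptyset,\BB_{\gerp_0}}}$ is a bijection on underlying topological spaces. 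Since $\pi_1\colon\tYbar\to\tXbar$ is proper as a morphism of proper $\FF$-schemes and its restriction to this stratum has zero-dimensional fibres, that restriction is a finite morphism.

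To upgrade this to the bijection on irreducible components, I would invoke part~(3) of the same proposition, which yields that $Z_{\emptyset,\BB_{\gerp_0}}$ is nonsingular and equi-dimensional of dimension $d+f_{\gerp_0}-(0+f_{\gerp_0})=d$, matching $\dim\tXbarR$. Nonsingularity makes both source and target locally irreducible, so their irreducible components are pairwise disjoint. Any irreducible component $W$ of $Z_{\emptyset,\BB_{\gerp_0}}$ then has dimension $d$ and maps under the finite morphism $\pi_1$ onto a closed irreducible subset of $\tXbarR$ of the same dimension, hence onto a unique irreducible component of $\tXbarR$. The bijectivity of $\pi_1$ on closed points, combined with the disjointness of components, forces distinct $W$'s to hit distinct irreducible components of $\tXbarR$; together with surjectivity, this gives the asserted bijection.

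The essential geometric content is already contained in Proposition~\ref{Proposition: stratification}, and what remains is a routine argument about finite universally bijective morphisms between nonsingular equi-dimensional schemes of the same dimension, so I do not foresee any serious obstacle.
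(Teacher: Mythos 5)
Your argument is correct and is essentially the paper's own proof, which simply cites parts (3) and (5) of Proposition \ref{Proposition: stratification}: part (5) gives surjectivity onto $Z_{\emptyset\cap\BB_{\gerp_0}}=Z_\emptyset=\tXbarR$ with singleton fibres, and part (3) gives the dimension count that forces each irreducible component of $Z_{\emptyset,\BB_{\gerp_0}}$ to map onto an irreducible component of $\tXbarR$. The only caveat is your appeal to properness/finiteness of $\pi_1$ restricted to the stratum (a locally closed subset of $\tYbar$, so properness of the restriction is not automatic, especially in the ramified case where the Rapoport locus is a proper open subset); this step is not needed, since the image of each component is constructible and irreducible of dimension $d$, so its closure is already a component of $\tXbarR$, and the injectivity argument goes through with dense images exactly as you wrote it.
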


\begin{proof} This follows from parts (3) and (5) of Proposition \ref{Proposition: stratification}. 
\end{proof}

\begin{lem} \label{Lemma: Isomorphism} Over $Z_{\emptyset,\BB_{\gerp_0}}\subset \tYbarR$, for every $\beta\in\BB_{\gerp_0}$, there is an isomorphism $\mu_\beta$ that makes the following diagram commutative.
 \begin{equation}\label{Diagram: isom}
 \xymatrix{  \pi_2^*\omega_{\sigma^{-1}\circ\beta}^p \ar[r]^{\mu_\beta}\ar[rd]_{(\pr_{\sigma^{-1}\circ\beta}^*)^p} & \pi_1^*\omega_\beta \ar[d]^{\Vbar^*_\beta} \\
& \pi_1^*\omega_{\sigma^{-1}\circ\beta}^p.}
 \end{equation}
\end{lem}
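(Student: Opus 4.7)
The plan is to exploit the functoriality of Verschiebung with respect to $\delta$, and then to establish that the resulting candidate for $\mu_\beta$ is an isomorphism via a divisor calculation on the stratification.

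First, I would apply the naturality of Verschiebung to the isogeny $\delta\colon\calA_Y\to\calA'_Y$: the identity $\delta\circ V_{\calA_Y}=V_{\calA'_Y}\circ\delta^{(p)}$, upon pulling back differentials, decomposing into $\beta$-components, and reducing modulo $\varpi_{\gerp_0}$, yields the relation
\[
\Vbar^*_\beta\circ\pr^*_\beta \;=\; (\pr^*_{\sigma^{-1}\circ\beta})^p\circ\Vbar'^*_\beta
\]
as morphisms $\pi_2^*\omega_\beta\to\pi_1^*\omega^p_{\sigma^{-1}\circ\beta}$ on $\tYbarR$, where $\Vbar'^*_\beta$ denotes the partial Hasse invariant of $\calA'_Y$ pulled back via $\pi_2$. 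I would then tentatively define
\[
\mu_\beta \;:=\; \pr^*_\beta\circ(\Vbar'^*_\beta)^{-1} \;=\; (\Vbar^*_\beta)^{-1}\circ(\pr^*_{\sigma^{-1}\circ\beta})^p,
\]
interpreted as rational sections; the two expressions agree wherever $\Vbar^*_\beta$ and $\Vbar'^*_\beta$ are both invertible, and the commutativity of diagram~\eqref{Diagram: isom} is then immediate from the identity above.

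The content of the lemma is that $\mu_\beta$ defines an honest morphism of line bundles on all of $Z_{\emptyset,\BB_{\gerp_0}}$ and is an isomorphism. At the level of divisors this is equivalent to the equality
\[
\mathrm{div}\,\Vbar^*_\beta \;=\; p\cdot\mathrm{div}\,\pr^*_{\sigma^{-1}\circ\beta} \qquad\text{on } Z_{\emptyset,\BB_{\gerp_0}}.
\]
By Proposition~\ref{Proposition: stratification}(5) the two supports coincide with the closure of the codimension-one stratum $W_{\{\beta\},\BB_{\gerp_0}}$: the locus where $\Vbar^*_\beta$ vanishes is $\pi_1^{-1}(Z_\beta)\cap Z_{\emptyset,\BB_{\gerp_0}}$, and using $\pi_1(W_{\varphi',\eta'})\subseteq Z_{\varphi'\cap\eta'}$ together with $\eta'\supseteq\BB_{\gerp_0}\ni\beta$, this locus consists of precisely those $W_{\varphi',\BB_{\gerp_0}}$ with $\beta\in\varphi'$; the vanishing locus of $\pr^*_{\sigma^{-1}\circ\beta}$ is described identically. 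The main obstacle is then the multiplicity computation: one needs $\Vbar^*_\beta$ to vanish to order~$p$ along $W_{\{\beta\},\BB_{\gerp_0}}$ while $\pr^*_{\sigma^{-1}\circ\beta}$ vanishes to order~$1$. This is a local deformation-theoretic calculation at the generic point of the stratum, which I would carry out either by directly generalising the analysis of \cite{GoKa12} to the ramified setting (replacing $\HH_\beta$ by $\calH_\beta$ as suggested in the proof of Proposition~\ref{Proposition: stratification}), or by invoking \S3.4--3.5 of~\cite{ERX}, where explicit local coordinates on the Pappas--Rapoport model make both vanishing orders visible. Granted the divisor equality, $\mu_\beta$ is a section of $\pi_2^*\omega^{-p}_{\sigma^{-1}\circ\beta}\otimes\pi_1^*\omega_\beta$ with trivial divisor, hence the desired isomorphism.
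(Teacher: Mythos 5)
Your reduction of the lemma to the divisor identity $\mathrm{div}(\pi_1^*H_\beta)=p\cdot\mathrm{div}(\pr^*_{\sigma^{-1}\circ\beta})$ on $Z_{\emptyset,\BB_{\gerp_0}}$ is legitimate (the rational map $\mu_\beta:=(\Vbar^*_\beta)^{-1}\circ(\pr^*_{\sigma^{-1}\circ\beta})^p$ makes sense since each component of $Z_{\emptyset,\BB_{\gerp_0}}$ dominates a component of $\tXbarR$, and commutativity propagates from a dense open by reducedness). But the argument as written has a genuine gap exactly where the content of the lemma sits: the multiplicity statement that $\pi_1^*H_\beta$ vanishes to order $p$, and $\pr^*_{\sigma^{-1}\circ\beta}$ to order $1$, along $Z_{\{\beta\},\BB_{\gerp_0}}$ is asserted and deferred to ``generalise \cite{GoKa12}'' or ``invoke \S 3.4--3.5 of \cite{ERX}''. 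Neither reference states these vanishing orders in the ramified setting; extracting them would require precisely the kind of explicit local (deformation-theoretic or Pappas--Rapoport coordinate) computation that constitutes the hard part of this approach, so the proposal proves the easy half (commutativity of the diagram as rational maps) and leaves the isomorphism claim unestablished. In addition, your support identification is only half-justified: Proposition \ref{Proposition: stratification}(5) gives $\pi_1(W_{\varphi',\BB_{\gerp_0}})\subseteq Z_{\varphi'}$, hence one inclusion of supports, but the converse --- that a point $Q\in W_{\varphi',\BB_{\gerp_0}}$ with $\beta\notin\varphi'$ has $H_\beta(\pi_1(Q))\neq 0$ --- does not follow formally from the quoted statement and needs its own argument (which, at the level of supports, is already most of the lemma, so one must be careful not to argue circularly).

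For contrast, the paper's proof avoids any divisor or vanishing-order computation. It works with the rank-two sheaves $\calH_\beta$: over $Z_{\emptyset,\BB_{\gerp_0}}$ the defining condition ${\delta'}^*_\beta=0$ (on $\omega$'s), together with a Dieudonn\'e-module rank count under the Rapoport condition, shows that $\delta^*_\beta\colon\pi_2^*\calH_\beta\onto\pi_1^*\omega_\beta$ and $\Vbar^*_\beta\colon\pi_2^*\calH_\beta\onto\pi_2^*\omega^{(p)}_{\sigma^{-1}\circ\beta}$ are surjections of line-bundle quotients with the \emph{same} rank-one kernel, namely $\mathrm{Im}({\delta'}^*_\beta)$; the isomorphism $\mu_\beta=\delta^*_\beta\circ(\Vbar^*_\beta)^{-1}$ and the commutativity then both come for free from functoriality of Verschiebung. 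If you want to salvage your route, you must actually carry out the order-$p$ versus order-$1$ local computation at the generic point of $W_{\{\beta\},\BB_{\gerp_0}}$ (and fix the support argument); as it stands, the proposal does not prove the lemma.
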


\begin{proof} The Verschiebung morphism of $\calA'_Y$ induces $\Vbar^*_\beta:\pi_2^*\calH_\beta \ra \pi_2^*\calH_{\sigma^{-1}\circ\beta}^{(p)}$ which extends $\Vbar^*_\beta:\pi_2^*\omega_\beta \ra \pi_2^*\omega_{\sigma^{-1}\circ\beta}^{(p)}$, and which has image equal to $\pi_2^*\omega_{\sigma^{-1}\circ\beta}^{(p)}$. It follows that its kernel is a locally free subsheaf of rank one of $\pi_2^*\calH_\beta$.

We also have $\delta_\beta^*:\pi_2^*\calH_\beta \ra \pi_1^*\calH_\beta$ and ${\delta'}_\beta^*: \pi_1^*\calH_{\calA_X \otimes_\of \gerp_0^{-1},\beta} \ra \pi_2^*\calH_\beta$ induced by the universal  isogenies $\delta$ and $\delta'$. Let $\beta \in \BB_{\gerp_0}$, and $(\delta:\uA \ra \uA')$ be a point on $\YbarR$. Interpreting $\calH_{A,\beta}$, $\calH_{A',\beta}$ as the $\beta$-components of the contravariant Dieudonne modules of $\calA[\gerp_0]$ and $\calA'[\gerp_0]$, and using the Rapoport condition, it follows that both $\delta_\beta^*\colon\calH_{A',\beta} \ra \calH_{A,\beta}$, and ${\delta'}_\beta^*\colon\calH_{A\otimes_\of {\gerp^{-1}_0},\beta} \ra \calH_{A',\beta}$ have $1$-dimensional kernels and images. Since $\delta_\beta^*{\delta'}_\beta^*=0$, we deduce that (for the universal data) we have ${\rm Ker}(\delta_\beta^*)={\rm Im}({\delta'}_\beta^*)$. In particular, since over $Z_{\emptyset,\BB_{\gerp_0}}\subset \tYbarR$ we have ${\delta'}_\beta^*(\pi_1^*\omega_{\calA_X \otimes_\of \gerp_0^{-1},\beta})=0$ for all $\beta \in \BB_{\gerp_0}$, it follows that ${\rm Im}(\delta^*_\beta)=\pi_1^*\omega_\beta$ over $Z_{\emptyset,\BB_{\gerp_0}}$. 

Putting the above together, we obtain a commutative diagram as follows 

 \begin{equation}\label{Diagram: isom}
 \xymatrix{  \pi_2^*\calH_{\beta} \ar@{>>}[rr]^{\delta^*_\beta}\ar@{>>}[d]_{\Vbar^*_\beta} && \pi_1^*\omega_\beta \ar[d]^{\Vbar^*_\beta} \\
\pi_2^*\omega_{\sigma^{-1}\circ\beta}^{(p)} \ar[rr]_{(\delta^*_{\sigma^{-1}\circ\beta})^{(p)}}&& \pi_1^*\omega_{\sigma^{-1}\circ\beta}^{(p)}.}
 \end{equation}
To end the proof, it is enough to show that the two surjective maps in the diagram, $\delta_\beta^*$, $\Vbar^*_\beta$, have the same kernel, in which case, one can define $\mu_\beta:=\delta^*_\beta \circ ({{\Vbar}_\beta}^*)^{-1}$. We have seen that ${\rm Ker}({\delta_\beta}^*)={\rm Im}({\delta'}_\beta^*)$. Since both kernels in question are locally free of rank one, it is enough to show that ${\rm Im}({\delta'}_\beta^*)\subset {\rm Ker}(\Vbar_\beta^*)$, i.e., $\Vbar_\beta^*\circ{\delta'}^*_\beta=0$. This follows from the following commutative diagram

 \begin{equation}\label{Diagram: isom}
 \xymatrix{ \pi_1^*\calH_{\calA_X \otimes_\of \gerp_0^{-1},\beta} \ar[rrr]^{{\delta'}_\beta^*}\ar@{>>}[d]_{ \Vbar_\beta^*} &&& \pi_2^*\calH_{\beta}  \ar@{>>}[d]^{\Vbar_\beta^*}  \\
(\pi_1^*\omega_{\calA_X \otimes_\of \gerp_0^{-1},\sigma^{-1}\circ\beta})^{(p)}\ar[rrr]_{({\delta'}^*_{\sigma^{-1}\circ\beta})^{(p)}}&&& \pi_2^*\omega_{\sigma^{-1}\circ\beta}^{(p)}  }
 \end{equation}
 shows that $\Vbar_\beta^*\circ{\delta'}^*_\beta=({\delta'}^*_{\sigma^{-1}\circ\beta})^{(p)}\circ \Vbar_\beta^*=0$ since ${\delta'}^*_{\sigma^{-1}\circ\beta}=0$ over $Z_{\emptyset,\BB_{\gerp_0}}$.
\end{proof}

\begin{lem}\label{Lemma: Hasse} Let $\beta \in \BB$. For any sheaf $\calF$ on $\tYbarR$,
\[
(\bar{V}_\beta^*)^{n_\beta}\otimes 1:H^0(\pi_1^*\omega_\beta^{n_\beta}\otimes \calF) \ra H^0(\pi_1^*\omega_\beta^{pn_\beta}\otimes \calF)
\]
is multiplication by $\pi_1^*H_\beta^{n_\beta}$.
\end{lem}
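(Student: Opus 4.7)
The approach is to reduce the statement to the defining property of the partial Hasse invariant $H_\beta$: first verify the analogous identity directly on $\tXbarR$, then transport it to $\tYbarR$ via pullback along $\pi_1$.

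By construction, the morphism of line bundles $\bar V_\beta^*:\omega_\beta \to \omega_{\sigma^{-1}\circ\beta}^p$ on $\tXbarR$ is, by the very definition of $H_\beta$, nothing but ``multiplication by $H_\beta$'', where $H_\beta$ is viewed as a global section of $\omega_{\sigma^{-1}\circ\beta}^p\otimes\omega_\beta^{-1}$: for every local section $s$ of $\omega_\beta$ one has $\bar V_\beta^*(s)=H_\beta\cdot s$. Taking the $n_\beta$-fold tensor power of this morphism produces $(\bar V_\beta^*)^{n_\beta}$, whose effect on local sections is multiplication by $H_\beta^{n_\beta}$.

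Next I would observe that pullback along $\pi_1:\tYbarR\to\tXbarR$ is a tensor functor on line bundles and carries the description of a morphism of line bundles as ``multiplication by a global section'' over to multiplication by the pulled-back section. Hence $\pi_1^*(\bar V_\beta^*)^{n_\beta}$ is multiplication by $\pi_1^*H_\beta^{n_\beta}$. Tensoring this morphism with the identity on $\calF$ and passing to global sections then yields the claim.

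There is no genuine obstacle; the only bookkeeping point is to match the codomains and powers of $\omega$ carefully, since the partial Verschiebung $\bar V_\beta^*$ involves the Frobenius shift $\beta\mapsto\sigma^{-1}\circ\beta$. The lemma is otherwise a formal unwinding of the definition of $H_\beta$, with the $\pi_1^*$ and the auxiliary coefficient sheaf $\calF$ playing no active role.
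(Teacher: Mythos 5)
Your argument is correct and matches the paper's proof, which simply observes that the statement is immediate from the definition of $H_\beta$ as the section of $\omega_{\sigma^{-1}\circ\beta}^{p}\otimes\omega_\beta^{-1}$ induced by $\bar V_\beta^*$; taking the $n_\beta$-th tensor power, pulling back along $\pi_1$, and tensoring with $\calF$ is exactly the intended formal unwinding. The only point you rightly handle implicitly is that the Verschiebung on $\calA_Y$ is the pullback of that on $\calA_X$, so the map on $\tYbarR$ is indeed $\pi_1^*$ of the one on $\tXbarR$.
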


\begin{proof} This is immediate from the definition of $H_\beta$.

\end{proof}
 
\subsubsection{Tate objects and $q$-expansions}\label{section: q-exp} Given any fractional ideal $\gera$ of $\calO_F$, let $X_\gera$  denote the subscheme of $X$  where the polarization module of the abelian scheme $(\calP_A,\calP^+_A)$ is isomorphic to $(\gera,\gera^+)$ as a module with notion of positivity. $X_\gera$ is a connected component of  $X$, and every connected component of $X$ is of the form $X_\gera$ for some $\gera$ as above.   Similarly, we can define $Y_\gera$ (where, for a point $(\delta:\uA \ra \uA^\prime)$, we impose the condition on the polarization module of $\uA$) and a similar statement is true for $Y$. 

 In the following, we refer to \cite[\S6]{AG05} for Tate objects over Hilbert modular varieties and $q$-expansions for Hilbert modular forms, even though our notation will be  slightly different.  For example, for a pair of fractional ideals $\gera,\gerb$, one can define a map 
 \[
 \underline{q}:  \gera^{-1} \arr \GG_m\!\otimes_\ZZ\! \gerd_F^{-1}\gerb^{-1}
 \]
 by sending an element $\alpha \in \gera^{-1}$ to the point of the torus $\GG_m\! \otimes_\ZZ\! \gerd_F^{-1}\gerb^{-1}=\GG_m\! \otimes_\ZZ\! \gerb^{*}$ whose value at the parameter $X^{\xi}$ (for $\xi \in \gerb$) is $q^{\xi\alpha}$ (over an appropriate ring which contains all these elements). We will replace the notation $\underline{q}(\gera^{-1})$ employed by \cite{AG05}  with $q^{\gera^{-1}}$. Let 
 \[
Ta_{\gera,\gerb}=(\underline{\GG_m\! \otimes_\ZZ\! \gerd_F^{-1}\gerb^{-1})/q^{\gera^{-1}}}
\]
denote a cusp on $X_{\gera\gerb}$, where underline indicates the inclusion of standard PEL structure (c.f. \cite[\S 6.4]{AG05}). For simplicity, we denote $Ta_{\gera,\of}$ by $Ta_\gera$. 
Let $\Omega(Ta_{\gera,\gerb})$ denote the invariant differentials on $Ta_{\gera,\gerb}$ defined over a base $\Spec(R)$. We have
\[
\Omega(Ta_{\gera,\gerb}) \cong ( (\gerd_F^{-1}\gerb^{-1})^* \otimes_\ZZ R) \frac{dt}{t}\cong (\gerb \otimes_\ZZ R)  \frac{dt}{t},
\]
where $t$ is the parameter on $\GG_m$. 

 Let $\gerp_0$ be a prime above $p$ in $\of$. Then
\[
Ta'_{\gera}=(\delta\colon Ta_\gera \ra Ta_{\gerp_0\gera})
\]
is a cusp on $Y_\gera$, where $\delta$ is the natural projection.

\begin{lem}\label{Lemma; cusps} Let $\gera$ be a fractional ideal of $\of$. Every cusp of the form $Ta'_\gera$ belongs to $W_{\emptyset,\BB_{\gerp_0}}$. Every irreducible (equivalently, connected) component of $Z_{\emptyset,\BB_{\gerp_0}}$ contains a cusp of the form $Ta'_\gera$.
\end{lem}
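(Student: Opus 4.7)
The plan is to analyze the Tate cusp $Ta'_\gera$ directly via its uniformization and then invoke Corollary~\ref{Corollary: pi_1 inverse} to propagate the result to every irreducible component of $Z_{\emptyset,\BB_{\gerp_0}}$.

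For the first assertion I compute $\varphi(Ta'_\gera)$ and $\eta(Ta'_\gera)$. Writing $Ta_\gera = T/\gera^{-1}$ with $T = \GG_m \otimes_\ZZ \gerd_F^{-1}$ and $Ta_{\gerp_0\gera} = T/\gerp_0^{-1}\gera^{-1}$, the isogeny $\delta$ is the natural projection coming from the lattice inclusion $\gera^{-1} \subsetneq \gerp_0^{-1}\gera^{-1}$. Its kernel $\gerp_0^{-1}\gera^{-1}/\gera^{-1}$ is \'etale, so the pullback $\delta^\ast\colon \Omega_{Ta_{\gerp_0\gera}} \to \Omega_{Ta_\gera}$ is an isomorphism; reducing modulo $\varpi_\gerp$ componentwise, each $\delta^\ast_\beta$ on $\omega_\beta$ is also an isomorphism, giving $\varphi(Ta'_\gera) = \emptyset$. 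For $\eta$, I exploit that $\delta'\circ\delta$ is by construction the natural map $Ta_\gera \to Ta_\gera\otimes_\of\gerp_0^{-1}$, which via Serre's tensor construction is identified with $(\GG_m \otimes_\ZZ \gerd_F^{-1}\gerp_0^{-1})/\gera^{-1}\gerp_0^{-1}$. Under the convention $\Omega(Ta_{\gera,\gerb}) \cong (\gerb \otimes R)\,dt/t$, the induced pullback on invariant differentials becomes the natural inclusion $\gerp_0 \otimes R \hookrightarrow \of \otimes R$. Decomposing according to primes $\gerp \mid p$ of $\of$, this inclusion is an isomorphism on factors $\gerp \neq \gerp_0$ but is multiplication by $\varpi_{\gerp_0}$ on the $\gerp_0$-factor, and so vanishes on $\omega_\beta$ for every $\beta \in \BB_{\gerp_0}$. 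Since $\delta^\ast_\beta$ is an isomorphism and $(\delta'\circ\delta)^\ast_\beta = \delta^\ast_\beta \circ (\delta')^\ast_\beta$, this forces $(\delta')^\ast_\beta = 0$ for every $\beta \in \BB_{\gerp_0}$, giving $\eta(Ta'_\gera) = \BB_{\gerp_0}$. Hence $Ta'_\gera \in W_{\emptyset,\BB_{\gerp_0}}$.

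For the second assertion, Corollary~\ref{Corollary: pi_1 inverse} identifies the irreducible components of $Z_{\emptyset,\BB_{\gerp_0}}$ bijectively with those of $\tXbarR$, namely the $\tXbarR_\gera$ indexed by $[\gera] \in \Cl^+(\of)$. Each $\tXbarR_\gera$ contains the cusp $Ta_\gera$, where the Rapoport condition holds because $\Omega_{Ta_\gera} \cong \of \otimes R\,dt/t$ is free of rank one over $\of \otimes R$. Since $Ta'_\gera \in Z_{\emptyset,\BB_{\gerp_0}}$ by the first part and $\pi_1(Ta'_\gera) = Ta_\gera \in \tXbarR_\gera$, the unique component of $Z_{\emptyset,\BB_{\gerp_0}}$ mapping to $\tXbarR_\gera$ contains $Ta'_\gera$. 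Letting $[\gera]$ range over $\Cl^+(\of)$ exhausts every component.

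The main subtlety is the bookkeeping with Serre's tensor construction, specifically the Tate uniformization of $Ta_\gera \otimes_\of \gerp_0^{-1}$ and the identification of the induced map on invariant differentials with the $\of$-module inclusion $\gerp_0 \otimes R \hookrightarrow \of \otimes R$, all consistent with the duality conventions of \cite{AG05}. Once these identifications are pinned down, the $\beta$-by-$\beta$ vanishing modulo $\varpi_{\gerp_0}$ and the component-level argument via Corollary~\ref{Corollary: pi_1 inverse} are essentially formal.
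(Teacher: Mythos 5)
Your proof is correct and takes essentially the same route as the paper: compute the maps on invariant differentials of the Tate object to get $\varphi(Ta'_\gera)=\emptyset$ and $\eta(Ta'_\gera)=\BB_{\gerp_0}$, then invoke Corollary~\ref{Corollary: pi_1 inverse} to place a cusp $Ta'_\gera$ on each component of $Z_{\emptyset,\BB_{\gerp_0}}$. The only cosmetic difference is that you obtain $\delta'^*_\beta=0$ by factoring through the composite $(\delta'\circ\delta)^*$ and the \'etaleness of $\delta$, whereas the paper computes $\delta'^*$ directly as the natural inclusion $(\gerp_0\otimes R)\,\frac{dt}{t}\hookrightarrow(\of\otimes R)\,\frac{dt}{t}$; both amount to the same calculation.
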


\begin{proof}

 Note that  the morphism
\[
\delta'\colon Ta_{\gerp_0\gera} \ra Ta_{\gera}\otimes_\of \gerp_0^{-1} \cong Ta_{\gerp_0\gera,\gerp_0},
\]
defined at the beginning of \S\ref{subsection: IwahoriHMV},  is the natural map $Ta_{\gerp_0\gera} \ra Ta_{\gerp_0\gera,\gerp_0}$. The induced morphism on the invariant differentials, over a base $\Spec(R)$, is given by  the natural inclusion
\[
\delta'^*\colon \Omega(Ta_{\gerp_0\gera,\gerp_0})\cong (\gerp_0 \otimes_{\ZZ} R) \frac{dt}{t} \ra  \Omega(Ta_{\gerp_0\gera})\cong (\of \otimes_{\ZZ} R) \frac{dt}{t}.
\]
This shows that the image of $\delta'^*$ is contained in the image of the action of $\gerp_0$. It follows that $\delta'^*_\beta=0$ for all $\beta \in \BB$. Similarly, one can show that $\delta^*_\beta \neq 0$ for all $\beta \in \BB$. This proves our claim that $Ta'_{\gera} \in W_{\emptyset,\BB}$.  For the second statement, let $C$ be an irreducible component of $Z_{\emptyset,\BB_{\gerp_0}}$. By Corollary \ref{Corollary: pi_1 inverse}, $\pi_1(C)=\tXbarR_\gera$ for some fractional ideal $\gera$.   Since $Ta_\gera \in \tXbarR_\gera$, and $Ta'_\gera \in Z_{\emptyset,\BB_{\gerp_0}}$ it follows that $Ta'_\gera \in \pi_1^{-1}(\tXbarR_\gera) \cap Z_{\emptyset,\BB_{\gerp_0}}$, which equals $C$ by Corollary \ref{Corollary: pi_1 inverse}.

\end{proof}

We let $\underline{\omega}$ denote the canonical generator of the sheaf $\wedge^d\ \Omega(Ta_\gera)$ on the base of $Ta_\gera$ .  If $f$ is a normalized Hilbert modular eigenform of
  parallel weight $k$, we can write
  \[
  f(Ta_\gera)=\sum_{\xi \in (\gera^{-1})^+\cup \{0\}}
  c_\xi q^\xi\underline{\omega}^k,
  \]
and in this representation $c_\xi=c(\xi\gera,h)$ is the eigenvalue of the $T_{\xi\gera}$ operator on $f$, for all $\xi\in (\gera^{-1})^+$. See \cite[(2.23)]{Shi78}.

\

\section{The main Theorem} 

\subsection{The statement} We now state our main result.

\begin{thm}\label{thm: main result}   Let $p>2$ be prime. let $F$ be a totally real field, and $$\rhobar:G_F\to\GL_2(\Fpbar)$$ an irreducible
  modular representation such that $\rhobar|_{G_{F(\zeta_p)}}$ is
  irreducible.  
If $p=3$ (respectively $p=5$), assume further that the projective image of
  $\rhobar(G_{F(\zeta_p)})$ is not conjugate to $\PSL_2(\F_3)$ (respectively $\PSL_2(\F_5)$).
  
Suppose that for each prime $\gerp|p$, $\rhobar|_{G_{F_\gerp}}$ is unramified, and that
  the eigenvalues of $\rhobar(\Frob_\gerp)$ are distinct.

 Then, there is a mod $p$ Hilbert modular form $h$ of
  parallel weight $1$ and level prime to $p$ such that
  $\rhobar_h\cong\rhobar$. Furthermore, $h$ can be chosen to have
  level bounded in terms of the Artin conductor of $\rhobar$.
  \end{thm}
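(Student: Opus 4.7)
The plan is to assemble the weight $p$ eigenforms produced by Corollary \ref{cor:2^n forms} into a single mod $p$ form $\fbar$ of parallel weight $p$ divisible by the Hasse invariant $H$, so that the quotient $h := \fbar/H$ is a mod $p$ Hilbert modular form of parallel weight one with $\rhobar_h \cong \rhobar$. This follows the structural blueprint of \cite{GeKa13}, with the geometric heart of the argument re-done on the Rapoport locus of the Iwahori-level variety $Y(\gerp_0)$ to accommodate ramification of $p$ in $F$.

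First I would apply Corollary \ref{cor:2^n forms} to produce $2^n$ normalized eigenforms $f_I$ of parallel weight $p$ and level $\Gamma_1(N)$ indexed by $I \subseteq \{\gerp \mid p\}$, with common mod $p$ Hecke eigenvalues away from $p$ and with $T_\gerp f_I = \tilde\alpha_{I,\gerp} f_I$ lifting $\alpha_{1,\gerp}$ if $\gerp \in I$ and $\alpha_{2,\gerp}$ otherwise. The candidate form is
$$ \fbar \ := \ \sum_{I \subseteq \{\gerp \mid p\}} \Bigl(\prod_{\gerp \in I}(-\alpha_{2,\gerp})\Bigr)\, \bar f_I, $$
where $\bar f_I$ is the reduction of $f_I$ modulo $p$. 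The coefficients are engineered so that, at each $\gerp \mid p$, the Iwahori-level Hecke correspondence singles out the $\alpha_{1,\gerp}$-eigenspace after division by $H$; the quotient $h$ then carries $T_\gerp$-eigenvalue $\alpha_{1,\gerp} + \alpha_{2,\gerp} = \tr \rhobar(\Frob_\gerp)$, which is exactly the parallel weight one Hecke datum dictated by $\rhobar$.

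The main task is to prove $H \mid \fbar$, and this I would do one partial Hasse invariant at a time. Fix $\gerp_0 \mid p$ and $\beta_0 \in \BB_{\gerp_0}$, and pass to the Iwahori-level variety $\tYbarR$. On the closed stratum $Z_{\emptyset, \BB_{\gerp_0}}$ of Proposition \ref{Proposition: stratification}, Lemma \ref{Lemma: Isomorphism} supplies isomorphisms $\mu_\beta \colon \pi_2^*\omega_{\sigma^{-1}\circ\beta}^p \iso \pi_1^*\omega_\beta$ compatible with the partial Hasse morphism. Pulling each $\bar f_I$ through $\pi_1$ and through $\pi_2$ (the latter composed with the $\mu_\beta$'s factor by factor in the weight decomposition of Lemma \ref{Lemma: sections}) turns the $\pi_2^*$-pullback of a weight $p$ form into a $\pi_1^*$-pullback of a weight one object in the $\gerp_0$-components. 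The relations $T_{\gerp_0} f_I = \tilde\alpha_{I,\gerp_0} f_I$, together with the Iwahori-level expression of $T_{\gerp_0}$ in terms of $\pi_1, \pi_2$, then force $\pi_1^*\fbar$ to vanish along $Z_{\emptyset,\BB_{\gerp_0}}$ to the multiplicity quantified by Lemma \ref{Lemma: Hasse}. Combining with the surjectivity of $\pi_1$ from Corollary \ref{Corollary: pi_1 inverse} and the injectivity of $\pi_1^*$ on global sections over $\tXbarR$, this descends to the divisibility $H_{\beta_0}^{e_{\gerp_0}} \mid \fbar$. Iterating over all $(\gerp_0, \beta_0)$ and invoking the normal-crossing structure of $\{Z_\beta\}$ yields $H \mid \fbar$.

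By Lemma \ref{Lemma: sections} and Koecher's principle, $h = \fbar/H$ then extends from $\tXbarR$ to $\tXbar$ as a weight one Hilbert modular form of level $\Gamma_1(N)$, with the prescribed Hecke eigenvalues. Non-vanishing of $\fbar$ is secured by the $p$-distinguished assumption $\alpha_{1,\gerp}\neq\alpha_{2,\gerp}$: a $q$-expansion computation at a Tate cusp $Ta_\gera$, using Lemma \ref{Lemma; cusps}, shows that the reductions $\bar f_I$ contribute linearly independent terms to $\fbar$. I expect the Iwahori-level geometric step to be the main obstacle: in the ramified case, the Rapoport locus is a proper open subscheme of $\Xbar$ (albeit with complement of codimension $\geq 2$), $\Omega_\beta$ has rank $e_\gerp$ rather than one, and the Goren--Oort stratification on $\tYbarR$ has $3^{f_{\gerp_0}}$ strata whose closure relations and deformation theory require the Pappas--Rapoport model of \cite{ERX} rather than the cleaner picture of \cite{GoKa12}. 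The decisive numerical coincidence making the method succeed is that the ramification index $e_{\gerp_0}$ is simultaneously the multiplicity of $H_{\beta_0}$ in $H$ and the order of vanishing produced by the Iwahori-level calculation on $Z_{\emptyset, \BB_{\gerp_0}}$.
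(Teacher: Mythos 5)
Your proposal reproduces the paper's architecture (the $2^n$ eigenforms $f_I$, the stratum $Z_{\emptyset,\BB_{\gerp_0}}$ on $\tYbarR$, Lemmas \ref{Lemma: Isomorphism} and \ref{Lemma: Hasse}, surjectivity of $\pi_1$), but it omits the actual engine of the divisibility step. The paper introduces \emph{two} combinations, $f=\sum_I(-1)^{|I|}\tilde\alpha_I f_I$ and an auxiliary form $g=\sum_I(-1)^{|I|}f_I$, and the decisive input is Lemma \ref{Lemma: q-expansion}: over $Z_{\emptyset,\BB_{\gerp_0}}$ one has the identity of sections $\pi_1^*\fbar=(\pr^*)^p\pi_2^*\gbar$, proved by a $q$-expansion computation at the cusps $Ta'_\gera$ (Lemma \ref{Lemma; cusps} places one in every connected component of the stratum), using the away-from-$p$ congruences among the $f_I$ together with the mod $p$ multiplicativity $c(\gerp_0\germ,h)\equiv c(\gerp_0,h)c(\germ,h)$. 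Only after this identity do the isomorphisms $\mu_{\beta_0}$ and Lemma \ref{Lemma: Hasse} rewrite $(\pr^*)^p\pi_2^*\gbar$ as $\pi_1^*H_{\beta_0}^{e_{\gerp_0}}$ times a regular section, and surjectivity of $\pi_1$ on the stratum descends the divisibility to $\tXbarR$. Your substitute --- that the relations $T_{\gerp_0}f_I=\tilde\alpha_{I,\gerp_0}f_I$ ``together with the Iwahori-level expression of $T_{\gerp_0}$'' force $\pi_1^*\fbar$ to vanish along $Z_{\emptyset,\BB_{\gerp_0}}$ to the right multiplicity --- is both unproved and mis-stated: $Z_{\emptyset,\BB_{\gerp_0}}$ is a union of irreducible components of $\tYbarR$ surjecting onto $\tXbarR$ (Proposition \ref{Proposition: stratification}, Corollary \ref{Corollary: pi_1 inverse}), so $\pi_1^*\fbar$ cannot vanish along it unless $\fbar=0$; what must be shown is that the \emph{restriction} of $\pi_1^*\fbar$ to this stratum is divisible by $\pi_1^*H_{\beta_0}^{e_{\gerp_0}}$, and the only mechanism for that is the comparison with a $\pr^*$-pullback from the $\pi_2$-side, i.e.\ an auxiliary $g$, which your argument never constructs.

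Moreover, your linear combination is not the right one, and the missing identity actually fails for it. Already for a single prime $\gerp$ above $p$ your form is $\fbar_{\rm yours}=\fbar_\emptyset-\alpha_{2,\gerp}\fbar_{\{\gerp\}}$, which is not proportional to the paper's $\alpha_{2,\gerp}\fbar_\emptyset-\alpha_{1,\gerp}\fbar_{\{\gerp\}}$. Its coefficient at an ideal $\gerp^v\gern$ with $\gerp\nmid\gern$ reduces to $c(\gern)\bigl(\alpha_{2,\gerp}^v-\alpha_{2,\gerp}\alpha_{1,\gerp}^v\bigr)$; if one tries to realize $\pi_1^*\fbar_{\rm yours}=(\pr^*)^p\pi_2^*\bar g'$ for some combination $g'=\lambda_\emptyset f_\emptyset+\lambda_{\{\gerp\}}f_{\{\gerp\}}$, matching these coefficients forces $\lambda_\emptyset\equiv\alpha_{2,\gerp}^{-1}$ and $\lambda_{\{\gerp\}}\equiv-\alpha_{2,\gerp}\alpha_{1,\gerp}^{-1}$, and then the coefficients of $\bar g'$ at ideals prime to $\gerp$ do not vanish mod $\germ_K$ (unless $\alpha_{1,\gerp}=\alpha_{2,\gerp}^2$), so no choice of auxiliary form repairs the mechanism. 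Consequently there is no reason your $\fbar$ is divisible by $H$, nor that a quotient would carry the Hecke data you assert. (Your nonvanishing check is also fragile: the prime-to-$p$ coefficients of your $\fbar$ are $c(\gern)\prod_{\gerp|p}(1-\alpha_{2,\gerp})$, which can vanish, whereas the paper's coefficients give $c(\gern)\prod_{\gerp|p}(\alpha_{2,\gerp}-\alpha_{1,\gerp})$, nonzero precisely by the $p$-distinguished hypothesis.) The fix is to take the coefficients to be lifts of the complementary eigenvalues, as in the paper, so that the cuspidal congruence $a_\xi(\gera)\equiv b_\xi(\gerp_0\gera)$ of Lemma \ref{Lemma: q-expansion} holds; that congruence is what the whole geometric argument is built on.
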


\subsection{The proof} We present the proof of our main theorem in several steps. By assumption, for each $\gerp\in \mathbb{S}$, the Frobenius at $\gerp$ has two distinct eigenvalues which we denote by  $\alpha_{1,\gerp},\alpha_{2,\gerp}$.  By Corollary \ref{cor:2^n forms}, there is a finite extension $K$ of $\QQ_p$ with ring of integers $\calO_K$ and residue field $\germ_K$, an integer $N$ divisible by the Artin conductor of $\rhobar$, and, for any $I \subset \{\gerp|p\}$, a normalized Hilbert eigenform $f_I$ of weight $p$ and level
    $\Gamma_1(N)$,  such that
    \begin{itemize}
    \item $\rhobar_{f_I}\cong\rhobar$,
    \item for each prime
    $\gerp|p$, we have $T_\gerp f_I=\tilde{\alpha}_{I,\gerp}f_I,$ where $\tilde{\alpha}_{I,\gerp}$ lifts
    $\alpha_{1,\gerp}$ if $\gerp\in I$, and $\alpha_{2,\gerp}$ if $\gerp\notin I$,
    \item for any prime $\gerl \nmid p$, the eigenvalues of $T_\gerl$ on the $f_I$'s have all the same reduction in $\germ_K$.
    \end{itemize}

For any $I\subset \mathbb{S}$, let $\tilde{\alpha}_I=\Pi_{\gerp|p}
    \tilde{\alpha}_{I,\gerp} $. Set
    \[
    f=\sum_{I\subset \mathbb{S}}(-1)^{|I|}\tilde{\alpha}_I f_I,
    \]
    \[
    g=\sum_{I\subset \mathbb{S}}(-1)^{|I|}f_I.
    \]Taking $N$ as above, and extending scalars from $W(\FF)$ to $W(\FF')$ large enough to contain $\calO_K$, we can view $f,g$ as elements of  $H^0(\tX \otimes W(\FF'),(\wedge^d\Omega)^p)$. For simplicity of notation, we won't indicate the base change in our notation, and write the rest of the proof as if the forms are defined over $W(\FF)$. Let $\fbar,\gbar$ denote their images in $H^0(\tXbar,(\wedge^d\Omega)^p)$. By Lemma \ref{Lemma: sections}, we can view
\[
\fbar,\gbar \in H^0(\tXbarR ,\displaystyle{\bigotimes_{\gerp|p}\bigotimes_{\beta\in \BB_\gerp} }\omega_\beta^{e_\gerp p}).
\]

\begin{defn} We define the following morphism of sheaves on $\tYbarR$,
\[
\pr^*:=\bigotimes_{\gerp|p}\bigotimes_{\beta\in \BB_\gerp} (\pr^*_\beta)^{e_\gerp}: H^0(\tYbarR ,\displaystyle{\bigotimes_{\gerp|p}\bigotimes_{\beta\in \BB_\gerp} }\pi_2^*\omega_\beta^{e_\gerp p}) \ra H^0(\tYbarR ,\displaystyle{\bigotimes_{\gerp|p}\bigotimes_{\beta\in \BB_\gerp} }\pi_1^*\omega_\beta^{e_\gerp p}),
\] 
where $\pr_\beta^*$ is defined in \S \ref{subsection: HMV}. Under the isomorphism in Lemma \ref{Lemma: sections}, this morphism is simply 
\[
(\wedge^d\delta^*)^p: (\wedge^d \Omega_{\calA'_{\tYbarR/\tYbarR}})^p \ra  (\wedge^d \Omega_{\calA_{\tYbarR/\tYbarR}})^p,
\]
induced by the universal isogeny  $\delta: \calA_Y \rightarrow \calA'_Y$.
\end{defn}

Both $\pi_1^*f$ and $\pr^*\pi_2^*g$ can be viewed as sections in $H^0(\tYbarR,\displaystyle{\bigotimes_{\gerp|p}\bigotimes_{\beta\in \BB_\gerp} }\pi_1^*\omega_\beta^{e_\gerp p})$.
We prove a lemma.

\begin{lemma}\label{Lemma: q-expansion} Over $Z_{\emptyset,\BB_{\gerp_0}}\subset \tYbarR$, we have $\pi_1^\ast \fbar=({\rm pr}^\ast)^p \pi_2^\ast \gbar$ .
\end{lemma}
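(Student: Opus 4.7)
The plan is to reduce the identity to a $q$-expansion computation at the cusps $Ta'_\gera$. By Lemma \ref{Lemma; cusps}, every irreducible component of $Z_{\emptyset,\BB_{\gerp_0}}$ contains such a cusp, and by Proposition \ref{Proposition: stratification}(5) together with Corollary \ref{Corollary: pi_1 inverse}, the projection $\pi_1$ puts the irreducible components of $Z_{\emptyset,\BB_{\gerp_0}}$ in bijection with those of $\tXbarR$. The standard $q$-expansion principle then reduces the desired identity to the equality of the formal expansions of $\pi_1^\ast\fbar$ and $(\pr^\ast)^p\pi_2^\ast\gbar$ at one cusp $Ta'_\gera$ in each component.

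The arithmetic input is a mod $p$ Hecke relation. For $\gerp\mid p$ with $\gerp\nmid N$, Shimura's formula $c(\germ,T_\gerp f_I)=c(\germ\gerp,f_I)+N(\gerp)^{p-1}c(\germ\gerp^{-1},f_I)$ combined with $T_\gerp f_I=\tilde\alpha_{I,\gerp}f_I$ gives, modulo $p$ (using $p\mid N(\gerp)^{p-1}$ since $\gerp\mid p$ and $p>1$), the identity $\bar c(\germ\gerp,f_I)\equiv\alpha_{I,\gerp}\,\bar c(\germ,f_I)\pmod p$. Iterating and using property (3) of Corollary \ref{cor:2^n forms} (so that $\bar c(\germ,f_I)=:a_\germ$ is independent of $I$ for $\germ$ coprime to $p$), I obtain $\bar c(\germ\prod_\gerp\gerp^{n_\gerp},f_I)=a_\germ\prod_\gerp\alpha_{I,\gerp}^{n_\gerp}$. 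Substituting into $\fbar$ and $\gbar$ and factoring the alternating sum over $I\subseteq\mathbb{S}$ into a product over primes $\gerp\mid p$ yields
\[
\fbar(Ta_\gera)=\sum_\xi a_\germ\!\!\prod_{\gerp\mid p}\!\bigl(\alpha_{2,\gerp}^{n_\gerp+1}-\alpha_{1,\gerp}^{n_\gerp+1}\bigr)q^\xi\underline{\omega}^p,
\]
\[
\gbar(Ta_{\gerp_0\gera})=\sum_\xi a_\germ\!\!\prod_{\gerp\mid p}\!\bigl(\alpha_{2,\gerp}^{m_\gerp}-\alpha_{1,\gerp}^{m_\gerp}\bigr)q^\xi\underline{\omega}^p,
\]
where $\xi\gera=\prod\gerp^{n_\gerp}\germ$ and $\xi\gerp_0\gera=\prod\gerp^{m_\gerp}\germ$ respectively, with $\germ$ coprime to $p$. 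The terms of the $\bar g$-expansion indexed by $\xi$ with $v_{\gerp_0}(\xi\gera)=-1$ vanish automatically, since then $m_{\gerp_0}=0$ and $\alpha_{2,\gerp_0}^0-\alpha_{1,\gerp_0}^0=0$; the remaining terms are indexed precisely by $\xi\in(\gera^{-1})^+$, matching those of $\fbar(Ta_\gera)$.

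The final step is the geometric comparison of $\pi_1^\ast\fbar$ and $(\pr^\ast)^p\pi_2^\ast\gbar$ on the formal neighborhood of $Ta'_\gera$. By Lemma \ref{Lemma: Isomorphism}, for $\beta\in\BB_{\gerp_0}$ the map $(\pr^\ast_{\sigma^{-1}\beta})^p$ factors as $\Vbar^\ast_\beta\circ\mu_\beta$ with $\mu_\beta$ an isomorphism; by Lemma \ref{Lemma: Hasse} the map $\Vbar^\ast_\beta$ acts on sections as multiplication by the partial Hasse invariant $\pi_1^\ast H_\beta$. For $\beta$ above a prime $\gerp\neq\gerp_0$, the map $\pr^\ast_\beta$ is already an isomorphism. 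At the cusp $Ta'_\gera$, the isogeny $\delta\colon Ta_\gera\to Ta_{\gerp_0\gera}$ lifts to the identity on the ambient $\GG_m\otimes_\ZZ\gerd_F^{-1}$, so $\delta^\ast$ is the identity on the canonical generator of each $\omega_\beta$, every $H_\beta$ evaluates to $1$ at the cusp, and each $\mu_\beta$ reduces to a canonical identification of line bundles. The main obstacle will be to make explicit the action of $\pi_1$ and $\pi_2$ on the formal parameters at $Ta'_\gera$ for ramified $\gerp_0$ — a Hilbert analogue of the classical identity $\pi_1^\ast q=q^p$, $\pi_2^\ast q=q$ at the anti-canonical cusp of $Y_0(p)$. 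Once this is in hand, the equality of the two $q$-expansions reduces precisely to the Hecke identity displayed above, completing the proof.
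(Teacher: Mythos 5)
Your overall route is the paper's own: by Lemma \ref{Lemma; cusps} every connected component of $Z_{\emptyset,\BB_{\gerp_0}}$ contains a cusp $Ta'_\gera$, so the identity of sections may be checked on $q$-expansions there, and the arithmetic input is the congruence $c(\gerp_0\germ,h)\equiv c(\gerp_0,h)c(\germ,h) \bmod \germ_K$. But your final matching step is a genuine gap. Taking, as you do, the full product $\tilde{\alpha}_I=\prod_{\gerp\mid p}\tilde{\alpha}_{I,\gerp}$ as the weight in $f$, your own displayed formulas give, for $\xi\in(\gera^{-1})^+$ with $\xi\gera=\germ\prod_{\gerp\mid p}\gerp^{n_\gerp}$ and $\germ$ prime to $p$,
\[
\text{coeff.\ of }q^\xi\text{ in }\fbar(Ta_\gera)\ =\ a_\germ\prod_{\gerp\mid p}\bigl(\alpha_{2,\gerp}^{n_\gerp+1}-\alpha_{1,\gerp}^{n_\gerp+1}\bigr),\qquad
\text{coeff.\ of }q^\xi\text{ in }\gbar(Ta_{\gerp_0\gera})\ =\ a_\germ\bigl(\alpha_{2,\gerp_0}^{n_{\gerp_0}+1}-\alpha_{1,\gerp_0}^{n_{\gerp_0}+1}\bigr)\prod_{\gerp\neq\gerp_0}\bigl(\alpha_{2,\gerp}^{n_\gerp}-\alpha_{1,\gerp}^{n_\gerp}\bigr),
\]
and these do \emph{not} agree once there is a second prime $\gerp\neq\gerp_0$ above $p$: already for $\gera=\of$, $\xi=1$ the right-hand coefficient vanishes (factor $\alpha_{2,\gerp}^{0}-\alpha_{1,\gerp}^{0}=0$) while the left-hand one is $\prod_{\gerp}(\alpha_{2,\gerp}-\alpha_{1,\gerp})\neq 0$ by $p$-distinguishedness. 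So the asserted ``matching'' does not reduce to your Hecke identity; it fails. The congruence that is true, and on which the paper's proof runs term by term, is $a_\xi(\gera)\equiv b_\xi(\gerp_0\gera)$ when the weight attached to $f_I$ is the single $T_{\gerp_0}$-eigenvalue $\tilde{\alpha}_{I,\gerp_0}=c(\gerp_0,f_I)$ (equivalently, keep the full product in $f$ but reweight $g$ by $\prod_{\gerp\neq\gerp_0}\tilde{\alpha}_{I,\gerp}$); this is exactly what the paper's computation does when it replaces the weight by $c(\gerp_0,f_I)$, and your argument needs the same reading of the weights before the cusp computation closes. A smaller point: your factorization assumes $\bar{c}(\germ,f_I)$ is independent of $I$ for \emph{every} $\germ$ prime to $p$, which needs a word about primes dividing $N$ and about central characters; the paper sidesteps this by only pairing $I$ with $I\cup\{\gerp_0\}$ and using congruence of all prime-to-$\gerp_0$ eigenvalues.

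The ``main obstacle'' you postpone is not there, and postponing it leaves your proposal incomplete at a step that needs no work. At $Ta'_\gera$ the isogeny $\delta\colon Ta_\gera\to Ta_{\gerp_0\gera}$ is the natural projection between quotients of the \emph{same} torus $\GG_m\otimes_\ZZ\gerd_F^{-1}$ by the lattices $q^{\gera^{-1}}\subset q^{(\gerp_0\gera)^{-1}}$; its kernel is of lattice (\'etale) type, which is precisely why these cusps lie in $W_{\emptyset,\BB_{\gerp_0}}$. Consequently the parameter $q$ and the generator $\underline{\omega}$ are literally the same on both sides, $\pr^\ast$ acts as the identity on $q$-expansions, and the only effect of $\pr^\ast\pi_2^\ast$ is that the exponents run over the larger cone $((\gerp_0\gera)^{-1})^+\supset(\gera^{-1})^+$, whose extra terms are exactly the ones you kill via $m_{\gerp_0}=0$. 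No analogue of $\pi_1^\ast q=q^p$ occurs on this stratum; that phenomenon belongs to the multiplicative-type cusps, which do not lie in $Z_{\emptyset,\BB_{\gerp_0}}$. Likewise Lemmas \ref{Lemma: Isomorphism} and \ref{Lemma: Hasse} play no role in this statement --- they are applied only \emph{after} this lemma, to extract divisibility by $H_{\beta_0}^{e_{\gerp_0}}$ --- so the third part of your plan should simply be dropped.
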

\begin{proof} By Lemma \ref{Lemma; cusps}, every connected component of $Z_{\emptyset,\BB_{\gerp_0}}$ contains a cusp of the form $Ta'_\gera$ for some fractional ideal $\gera$ of $\of$. Hence,  it is enough to show the above equality at such cusps.  Let $\gera$ be a fractional ideal of $\of$. Recall from \S \ref{section: q-exp} that $\underline{\omega}$ denotes the canonical generator of the sheaf $\wedge^d\Omega$ on the base of $Ta_\gera$ or $Ta_{\gerp_0\gera}$. Write 
 
 \[
 f(Ta_{\gera})=\sum_{\xi \in (\gera^{-1})^+} a_\xi(\gera) q^\xi \underline{\omega}^p,
\]
\[
 g(Ta_{\gera})=\sum_{\xi \in (\gera^{-1})^+} b_\xi(\gera) q^\xi \underline{\omega}^p.
 \] 
 It follows that 
\[
\pi_1^\ast f(Ta'_\gera)=f(Ta_{\gera})=\sum_{\xi \in (\gera^{-1})^+} a_\xi(\gera) q^\xi \underline{\omega}^p,
\]
\[
{\rm  pr}^\ast \pi_2^\ast g(Ta'_\gera)={\rm pr}^\ast g(Ta_{\gerp_0\gera})=\sum_{\xi \in (\gerp_0\gera^{-1})^+} b_\xi(\gerp_0\gera) q^\xi \underline{\omega}^p.
\]
To prove the result it is enough to show that $b_\xi(p\gera)\in \germ_K$ if $\xi \in
(\gerp_0\gera^{-1})^+-(\gera^{-1})^+$, and  $a_\xi(\gera)-
b_\xi(\gerp_0\gera)\in \germ_K$ for $\xi \in (\gera^{-1})^+$. 

First assume that $\xi \in
(\gerp_0\gera^{-1})^+-(\gera^{-1})^+$, i.e., $\gerp_0 \not |\ \xi\gerp_0\gera$. We can write 
\[
b_\xi(\gerp_0\gera)=\sum_{I\subset \mathbb{S}} (-1)^{|I|} c(\xi \gerp_0\gera,f_I)=\sum_{\gerp_0 \not\in I}(-1)^{|I|}c(\xi \gerp_0\gera,f_I) -\sum_{\gerp_0 \in I} (-1)^{|I|}c(\xi \gerp_0\gera,f_I).
\]
Since for all  primes ideals $\gerl\neq \gerp_0$, the modular forms $f_I$ and $f_{I\cup \gerp_0}$ have $T_\gerl$-eigenvalues that are congruent modulo $\germ_K$, it follows that $c(\xi\gerp_0\gera,f_I)-c(\xi \gerp_0\gera,f_{I \cup \{\gerp_0\}}) \in \germ_K$, whence $b_\xi(\gerp_0\gera) \in \germ_K$.

Now, assume $\xi\in  (\gera^{-1})^+$. We can write
\[
b_\xi(\gerp_0\gera)=\sum_{I \subset \mathbb{S}} (-1)^{|I|}c(\xi
\gerp_0\gera,f_I),
\]
\[
a_\xi(\gera)= \sum_{I \subset \mathbb{S}}
(-1)^{|I|}\tgamma_I c(\xi \gera,f_I)= \sum_{I \subset \mathbb{S}}
(-1)^{|I|}c(\gerp_0,f_I) c(\xi \gera,f_I).
\]

Since for any Hilbert modular eigenform $h$, and  for any integral
ideal $\germ$ of $\of$, we have $c(\gerp_0\germ,h)\equiv c(\gerp_0,h)
c(\germ,h)\ {\rm mod}\ p$, it follows that  $b_\xi(\gerp_0\gera)-a_\xi(\gera) \in \germ_K$.
\end{proof}
 
 We now prove that $\bar{f}$ is divisible by $H$; i.e.,  $\bar{f}/H$ is a mod $p$ Hilbert modular form of parallel weight one. Since irreducible components of distinct $Z_\beta$'s intersect transversally,  it is enough to prove that for each $\gerp|p$, $\fbar$ is divisible by $H_\beta^{e_\gerp}$, for all $\beta \in \BB_\gerp$. We fix $\gerp=\gerp_0$ and show this for all $\beta \in \BB_{\gerp_0}$ working on a stratum on $\tYbarR=\tYbarR(\gerp_0)$.

 Fix $\beta_0 \in \BB_{\gerp_0}$. Let $\pr^{\beta_0,*}=\bigotimes_{\gerp|p}\bigotimes_{\beta\in \BB_\gerp-\{\beta_0\}} (\pr^*_\beta)^{e_\gerp}$. Lemma \ref{Lemma: Isomorphism} proves that the following diagram is commutative over $Z_{\emptyset,\BB_{\gerp_0}}$.
 
 \begin{equation}\label{Diagram: isom2}
 \xymatrix{  \displaystyle{\pi_2^*\omega_{\sigma^{-1}\circ\beta_0}^{p e_{\gerp_0}} \otimes \bigotimes_{\beta \neq \sigma^{-1}\circ\beta_0} \pi_2^*\omega_{\beta}^{p e_{\gerp_0}}}\ar[rr]^{\hspace{7mm}\mu_{\beta_0}^{e_{\gerp_0}} \otimes 1}\ar[rrd]_{(\pr^*)^p} & & \displaystyle{\pi_1^*\omega_{\beta_0}^{e_{\gerp_0}} \otimes \bigotimes_{\beta \neq \sigma^{-1}\circ\beta_0} \pi_2^*\omega_{\beta}^{p e_{\gerp_0}}} \ar[d]^{(\Vbar^*_\beta)^{e_{\gerp_0}} \otimes (\pr^{{\beta_0},*})^p} \\
& & \displaystyle{\pi_1^*\omega_{\sigma^{-1}\circ\beta_0}^{pe_{\gerp_0}} \otimes \bigotimes_{\beta \neq \sigma^{-1}\circ\beta_0} \pi_1^*\omega_{\beta}^{p e_{\gerp_0}}}.} 
 \end{equation}
 Lemma \ref{Lemma: q-expansion} tells us that on $Z_{\emptyset,\BB_{\gerp_0}}$, we have
 
 \begin{eqnarray*}
 \pi_1^*\fbar=(\pr^*)^p\pi_2^*\gbar&=&(\Vbar^*_\beta)^{e_{\gerp_0}} (\mu_{\beta_0}^{e_{\gerp_0}}\otimes (\pr^{{\beta_0},*})^p)(\pi_2^*\gbar)\\
 &=&\pi_1^*H_{\beta_0}^{e_{\gerp_0}}(\mu_{\beta_0}^{e_{\gerp_0}}\otimes (\pr^{{\beta_0},*})^p)(\pi_2^*\gbar)
\end{eqnarray*}
where the last equality follows from Lemma \ref{Lemma: Hasse}. This proves that on the stratum $Z_{\emptyset,\BB_{\gerp_0}}$, the section $\pi_1^*\fbar$ is divisible by $\pi_1^*H_{\beta_0}^{e_{\gerp_0}}$. Since $\pi_1\colon Z_{\emptyset,\BB_{\gerp_0}} \ra \tXbarR$ is a surjective morphism between smooth schemes, it follows that $\fbar$ is divisible by $H_{\beta_0}^{e_{\gerp_0}}$ (If $\fbar/H_{\beta_0}^{e_{\gerp_0}}$ has poles along a divisor, then $\pi_1^*(\fbar/H_{\beta_0}^{e_{\gerp_0}})$ will have poles along the inverse image of that divisor). 

Repeating this argument for all $\gerp|p$, and all $\beta\in \BB_\gerp$, we find that $\fbar$ is divisible by $H=\displaystyle{\prod_{\gerp|p}\prod_{\beta \in \BB_\gerp} H_\beta^{e_\gerp}}$. It follows that $\fbar/H$ is the desired mod $p$ Hilbert modular form of parallel weight $1$.


\begin{thebibliography}{00}


\bibitem[AG05]{AG05}  Andreatta, F., Goren, E. Z.,  Hilbert modular forms:
mod $p$ and $p$-adic aspects. Mem. Amer. Math. Soc. 173 (2005), no.
819.

\bibitem[BDJ10]{BDJ10} K. Buzzard, F. Diamond and F. Jarvis, On SerreÕs conjecture for mod $\ell$ Galois representations over totally real fields, Duke Math. J. 155 (2010), 105--161.

\bibitem[BLGG12]{BLGG12}Thomas Barnet-Lamb, Toby Gee, and David Geraghty, Congruences between Hilbert modular forms: constructing ordinary lifts, Duke Math. J. 161 (2012), no. 8, 1521--1580.
  
\bibitem[BLGG13]{BLGG13}  Thomas Barnet-Lamb, Toby Gee, and David Geraghty, Serre weights for rank two unitary groups, Mathematische Annalen 356 (2013), no. 4, 1551--1598.

\bibitem[BGGT14]{BGGT14} Thomas Barnet-Lamb, Toby Gee, David Geraghty, Richard Taylor, Potential Automorphy and change of weight, Annals of Mathematics (2) 179 (2014), no. 2, 501--609.

\bibitem[BGHT11]{BGHT11} T. Barnet-Lamb, D. Geraghty, M. Harris and R. Taylor, A family of Calabi-Yau varieties and potential automorphy II, Publ. Res. Inst. Math. Sci. 47 (2011), 29--98.

\bibitem[CHT08]{CHT08} Laurent Clozel, Michael Harris, and Richard Taylor, Automorphy for some $\ell$-adic lifts of automorphic mod $\ell$ Galois representations, Pub. Math. IHES 108 (2008), 1--181.

\bibitem[CoVo92]{CoVo92} R. F. Coleman and J. F. Voloch, Companion forms and Kodaira-Spencer theory, Invent. Math. 110 (1992), 263--282.

\bibitem[DP94]{DP94} Pierre Deligne and Georgios Pappas, Singularit\'es des espaces de modules de Hilbert, en les caract\'eristiques divisant le discriminant, Compositio Math. 90 (1994), no. 1, 59Ð79.

 \bibitem[DiWi]{DiWi} Mladen Dimitrov, Gabor Wiese, Unramifiedness of Galois representations attached to weight one Hilbert modular eigenforms mod $p$. Preprint.
 
 \bibitem[Ed92]{Ed92} B. Edixhoven, The weight in SerreÕs conjectures on modular forms, Invent. Math. 109 (1992), 563 -- 594.
 
\bibitem[ERX]{ERX} M. Emerton, D. Reduzzi, and L. Xiao, Unramifiedness of Galois representations arising from Hilbert modular surfaces.   Preprint.


\bibitem [Gee11]{Gee11}Toby Gee, Automorphic lifts of prescribed types, Math. Ann. 350 (2011), no. 1, 107--144

\bibitem[GG12]{GG12} Toby Gee and D. Geraghty, Companion forms for unitary and symplectic groups, Duke Math. J. 161 (2012), 247--303.
 
\bibitem[GeKa13]{GeKa13} Toby Gee and Payman Kassaei, Companion forms in parallel weight one, Compos. Math. 149 (2013), no. 6, 903--913.

\bibitem[GoKa12]{GoKa12}Eyal Goren and Payman L Kassaei,  Canonical subgroups over Hilbert modular varieties, J. Reine Angew. Math. (Crelle) (2012) 670, 1--63. 

\bibitem[Gro90]{Gro90}Benedict H. Gross, A tameness criterion for Galois representations associated to mod- ular forms (mod p), Duke Math. J. 61 (1990), no. 2, 445--517.

\bibitem[Hid88]{Hid88} Haruzo Hida, On p-adic Hecke algebras for $GL_2$ over totally real fields, Ann. of Math. (2) 128 (1988), no. 2, 295--384.
 
 \bibitem[Ki08]{Ki08} Mark Kisin, Potentially semi-stable deformation rings, J. Amer. Math. Soc. 21 (2008), no. 2, 513--546. 
 
\bibitem[Ser87]{Ser87} Jean-Pierre Serre, Sur les repr\'esentations modulaires de degr\'e $2$ de $Gal(\overline{\QQ}/\QQ)$, Duke Math. J. 54 (1987), no. 1, 179--230. 

\bibitem[Shi78]{Shi78} Goro Shimura, The special values of the zeta functions associated with Hilbert modular forms, Duke Math. J. 45 (1978), no. 3, 637--679.

\bibitem[Tho12]{Tho12} J. Thorne, On the automorphy of $\ell$-adic Galois representations with small residual image, J. Inst. Math. Jussieu 11 (2012), 855--920.

\end{thebibliography}
\end{document}